\newcommand{\EQ}{\begin{eqnarray}}
\newcommand{\EN}{\end{eqnarray}}
\newcommand{\EQQ}{\begin{eqnarray*}}
\newcommand{\ENN}{\end{eqnarray*}}
\newcommand{\col}{\mbox{col}}
\newcommand{\row}{\mbox{row}}
\newcommand{\vecc}{\mbox{vec}}
\newtheorem{thm}{\bf \em{Theorem}}
\newtheorem{lem}{\bf \em{Lemma}}
\newtheorem{rem}{\bf \em{Remark}}
\newtheorem{prob}{\bf \em{Problem}}
\newtheorem{ass}{\bf \em{Assumption}}
\newcommand{\myr}{\color{black}}
\newcommand{\hwz}{\color{black}} % Hongwei's revision for resubmission
\newcommand{\hw}{\color{black}} % Hongwei's revision
\newcommand{\Mathcal}{\mathcal}
\begin{document}
\onecolumn
{\Large

This paper was originally published in

Wang, Shimin, Hongwei Zhang, Simone Baldi, and Renxin Zhong. "Leaderless Consensus of Heterogeneous Multiple Euler-Lagrange Systems with Unknown Disturbance." IEEE Transactions on Automatic Control, DOI: 10.1109/TAC.2022.3172594, 2022.

We have spotted some typos in Remark 1, Lemma 3 and Remark 3
where, in some place,  the $\mathds{R}$ should be $\mathds{C}$

%$\mathds{C}$ denote the set
%of complex numbers.

These typos are now corrected and the modified version is posted here.}

%Some $\mu_2$ in [a] should be changed in to $\mu_1$ as follows:

%\begin{itemize}
%   \item ``$\mu_2$'' in the second line of equation (19) should be ``$\mu_1$''.
%   \item ``$\mu_2$'' in the first line and second line of equation (20) should be ``$\mu_1$''.
%   \item ``$\mu_2$'' in the first line and second line of equation (22) should be ``$\mu_1$''.
%   \item ``$\mu_2$'' in the equation (23) should be ``$\mu_1$''.
%   \item ``$\mu_2$'' in the equation (24) should be ``$\mu_1$''.
%   \item ``$\mu_2$'' in the equation (33c) should be ``$\mu_1$''.
%   \item ``$\mu_2$'' in the equation (42) should be ``$\mu_1$''.
%   \item ``$\mu_2$'' in the equation (43) should be ``$\mu_1$''.
%   \item The third line below equation (43), ``$\mu_2$'' in the equation (43) should be ``$\mu_1$''.
%   \item ``$\mu_2$'' in the equation (45a) should be ``$\mu_1$''.
%\end{itemize}
%
%[a] Wang, Shimin, and Jie Huang, "Adaptive Leader-Following Consensus for Multiple Euler-Lagrange Systems With an Uncertain Leader System." IEEE transactions on neural networks and learning systems, vol.30, no.7, July, 2019

 \twocolumn
\title{Leaderless Consensus of Heterogeneous Multiple Euler-Lagrange Systems with Unknown Disturbance}

\author{Shimin~Wang,~Hongwei~Zhang, Simone~Baldi and Renxin~Zhong
\thanks{This work was supported in part by NSERC, by the National Natural Science Foundation of China under grant 61773322, by the Research Fund for International Scientists under grant 62150610499, and by the Key Intergovernmental Special Fund of National Key Research and Development Program under grant 2021YFE0198700. (Corresponding authors: Hongwei
Zhang and Simone Baldi)}
\thanks{Shimin~Wang is with the Department of Chemical Engineering, Queen’s University, Kingston, ON K7L 3N6, Canada. E-mail: shimin.wang@queensu.ca.}
\thanks{Hongwei Zhang is with the School of Mechanical Engineering and Automation, Harbin Institute of Technology, Shenzhen, 518055, P. R. China. E-mail: hwzhang@hit.edu.cn.}
\thanks{ Simone Baldi is with the School of Mathematics, Southeast University, Nanjing, P. R. China, and with the Delft Center for Systems and Control, Delft University of Technology, 2628 CD Delft, Netherlands. E-mail: s.baldi@tudelft.nl.}
\thanks{Renxin Zhong is with the Guangdong Key Laboratory of Intelligent Transportation Systems, School of Intelligent Systems Engineering, Sun Yat-Sen University, Guangzhou 510275, P. R. China. E-mail: zhrenxin@mail.sysu.edu.cn.}
}
\maketitle

\begin{abstract}
This paper studies the leaderless consensus problem of {heterogeneous} multiple networked Euler-Lagrange systems subject to persistent disturbances with unknown constant biases, amplitudes, initial phases and frequencies. The main characteristic of this study is that none of the agents has information of a common reference model or of a common reference trajectory. Therefore, the agents must simultaneously and in a distributed way: achieve consensus to a common reference model (group model); achieve consensus to a common reference trajectory; {and} reject the unknown disturbances. We show that this is possible via a suitable combination of techniques of distributed `observers', internal model principle and adaptive regulation. The proposed design generalizes recent results on group model learning, which have been studied for linear agents over undirected networks. In this work, group model learning is achieved for Euler-Lagrange dynamics over directed networks in the presence of persistent unknown disturbances.
\end{abstract}

\begin{IEEEkeywords} Cooperative control, Euler-Lagrange system, Leaderless consensus, Multi-agent System, Output Regulation\end{IEEEkeywords}

\section{Introduction}
Euler-Lagrange (EL) systems have found widespread applications in engineering and can model a variety of mechanical systems, such as marine vessels \cite{peng2016containment}, rigid {spacecrafts} \cite{chen2014attitude}, and robot manipulators \cite{lewis2004Book-robot,zarikian2005external}. {Since} precise modeling of an EL system is very difficult {in practice and} disturbances are always entangled with the system movement, control of uncertain EL systems with disturbance rejection has been an important issue in control community \cite{Patre2011TAC-EL, chen2009attitude, lu2019adaptive}. A recent work \cite{lu2019adaptive} solved a global asymptotic tracking control problem of EL systems with disturbance rejection, where the disturbance is a combination of sinusoidal signals with unknown frequencies, amplitudes and phase angles. However, a similar problem becomes more challenging in a cooperative setting with multiple EL systems since, in addition to rejecting disturbances, the systems should achieve a common behavior with limited information (only using local information from a few neighbors).
%{\myr Furthermore, heterogeneity, parametric uncertainty, and additive unmodeled disturbances can propagate through the network and lead to agents suffering undesirable performance and instability, causing cascading objective control failure \cite{klotz2014asymptotic}. }
%this work only focuses on control of a single EL system.  To accomplish complex tasks which are beyond the power of a single EL system, multiple EL systems are often required to work together in a cooperative manner.
 %containment control problems \cite{klotz2016robust},

Cooperative control of multiple EL systems has been intensively investigated in the past two decades mainly under two formulations, i.e., leader-following consensus {\myr(with a single leader or multiple leaders)} \cite{nuno2018consensus, cai2016leader, klotz2016robust} and leaderless consensus \cite{ren2009distributed, klotz2014asymptotic, nuno2020strict, MeiJ2020TCNS-EL}. For leader-following consensus,
{\hwz a leader (or a group of leaders) generate a desired trajectory (or a convex hull) that all follower agents should follow. The desired trajectories can be  time-varying  and the tracking problem} will become even more stringent if there exist some external disturbances \cite{liu2019leader}. In this sense, the tracking control of a single Euler-Lagrange system as in \cite{lu2019adaptive} can be viewed as a special case of the leader-following consensus with one leader (i.e. the desired trajectory) and one follower. To {tackle} the local information challenge, the idea of using a distributed observer \cite{cai2014leader} or an adaptive distributed observer \cite{cai2016leader,wang2018adaptive} was proposed for leader-following consensus.  The idea is that only part of the follower {agents} can directly get access to the state and system matrix information of the leader, {while} the rest of the follower {agents} {should} estimate the leader's information using observers.

In many practical scenarios, there is no such leader.
For example, when a leaderless swarm of UAVs performs surveillance missions, individuals need to reach a consensus in altitude and heading angle and must coordinate with each other a commonly agreed trajectory to track \cite{Rao2014TCST-sliding}. A similar setting has been reported for a group of robotic arms equipped on different mobile robots to cooperatively scan a target area \cite{ren2009distributed}.
%{\myr For example, the power networks in \cite{deng2019distributed} is described by leaderless multiple Euler–Lagrange systems and another work about (leaderless) robotic manipulators in \cite{wei2021velocity}.}
Most existing works on leaderless consensus
of multiple networked EL systems typically allow the common
trajectory to be time-invariant \cite{ren2009distributed, MeiJ2020TCNS-EL}. Even when a disturbance is considered, as in \cite{MeiJ2020TCNS-EL}, it is assumed that the final consensus equilibrium is a constant trajectory.
%
%Most existing works on leaderless consensus of multiple networked EL systems only allow the common trajectory to be time-invariant \cite{ren2009distributed, MeiJ2020TCNS-EL}. The authors in \cite{ren2009distributed} design distributed leaderless consensus algorithms for EL systems such that the vector of generalized coordinates converge to a time-independent vector under undirected communication graphs. It has been long recognized that although the treatment of undirected graphs is much easier than that of directed graphs, the latter is more general and appealing in applications. {\myr A very recent work \cite{MeiJ2020TCNS-EL} studies the leaderless consensus problem for multiple Lagrangian systems in the presence of parametric uncertainties and external disturbances over directed graphs. However, the external disturbance is assumed to be bounded, and the final consensus equilibrium must be a constant trajectory.}

As synchronization of uncertain heterogeneous multi-agent systems to more complex trajectories  requires either a leader agent generating a desired trajectory, or a common model according to the internal model principle, %is explicitly generated \cite{baldi2019leaderless}.
it is interesting to ask: what can be done without a leader? This problem has not been sufficiently investigated until very recently \cite{baldi2019leaderless, YanYM2020TCNS-autonomous, chenzy2020}. The work \cite{baldi2019leaderless} gave a first answer for a special class of linear multi-agent systems, i.e., heterogeneous oscillator systems. It formulates leaderless consensus as a `virtual' leader-following consensus problem. It shows that there exists a `group model' that has the same structure as the oscillators. Via consensus dynamics, each agent can learn the parameters of the group model without its direct knowledge, and finally synchronize to it. In this sense, synchronization of multiple oscillators to a non-constant trajectory is achieved. More recently, a similar framework has been proposed in \cite{chenzy2020} for leaderless consensus of linear time-varying multi-agent system, whereas \cite{YanYM2020TCNS-autonomous} proposes a two-step approach, i.e., dynamics synchronization and state synchronization, and provides sufficient conditions for the efficacy of this two-step design. However, \cite{baldi2019leaderless, YanYM2020TCNS-autonomous,chenzy2020} only consider linear dynamics or undirected communication graphs.

Motivated by these recent achievements, this paper aims to solve a leaderless consensus problem of uncertain heterogeneous EL systems with unknown disturbances over directed graphs. The disturbance is a compound sinusoidal signal with unknown magnitudes, frequencies and phase angles. Each agent aims to achieve consensus to a complex time-varying trajectory, cooperatively contributed by the whole group of agents. This include the constant consensus equilibrium \cite{MeiJ2020TCNS-EL} as a special case. Therefore, the agents must simultaneously and in a distributed way: achieve consensus to a common group system matrix; achieve consensus to a common reference trajectory; and reject the unknown harmonic disturbances. Inspired by both \cite{chen2009attitude,cai2016leader,lu2019adaptive} and \cite{baldi2019leaderless}, we show that this is possible via a suitable combination of consensus dynamics, internal model principle and adaptive regulation. More specifically, we propose an `observer' for each agent, whose task is to `observe' the state and system matrix of {\myr an autonomous system which is not pre-specified but arising from the inherent properties and the initial states of the agents. We put the term `observe' in quotes since this {\myr autonomous system} does not exist a priori.} In other words, it is an imaginary one, and is generated through the collaboration of all observers of the group of agents.
%In this sense, this paper extends the distributed observer approach \cite{cai2016leader} from leader-following consensus problem to leaderless consensus problem.
%{\myr and it also extends the leaderless consensus approach in \cite{wieland2011internal} by avoiding pre-specifying the system matrix of the internal model.}
%Based on the consensus stage, we finally design a cooperative controller for each EL system to synchronize to the `observer' and reject the external disturbances.

The contribution and novelties of our approach are summarized as follows:
\begin{enumerate}
  \item  In place of considering linear dynamics and undirected graphs, we solve a leaderless consensus problem of uncertain heterogeneous EL systems with unknown disturbances over directed graphs. This requires to develop new technical results (Lemmas 2-4 in this work) not reported in the literature.
  \item  Based on the consensus stage, we design a cooperative
controller for each EL system to synchronize to the
observer while rejecting in an adaptive way the external unknown disturbances. %In addition, the multiple Euler-Lagrange systems can tracking the unbounded signal in our paper and we only assume that the derivative of the final consensus state is bounded.
  \item  Instead of a bounded tracking signal as in the single Euler-Lagrange system case \cite{lu2019adaptive}, we only require that the derivative of the final consensus state is bounded without imposing bounds on the cooperatively agreed trajectory.
\end{enumerate}

The rest of this paper is organized as follows. The problem is formulated in Section \ref{section2}. In Section \ref{section3}, distributed `observers' are designed for all agents, which collaboratively generate {\myr an autonomous system which is not pre-specified but arising from the inherent properties and the initial states of the agents}. The main result is presented in Section \ref{section4}, followed by a numerical example in Section \ref{section5}. Section \ref{section6} concludes the paper.

\textbf{Notation:} Notation $\|\cdot\|$ is the Euclidean norm. The set of (positive) real numbers are denoted by ($\mathds{R}_{+}$) $\mathds{R}$. The set of complex numbers are denoted by $\mathds{C}$. For $X_i\in \mathds{R}^{n_i\times m}$, $i=1,\dots,N$, let $\col(X_1,\dots,X_N)=[X_{1}^{T},\dots,X_N^T]^T$ and $\mathds{1}_N=\col(1,\dots,1)\in \mathds{R}^{N}$. For $X_i\in \mathds{R}^{m\times n_i}$, $i=1,\dots,N$, let $\row(X_1,\dots,X_N)=[X_{1},\dots,X_N]$. For any matrix $X\in \mathds{R}^{m\times n}$, let $\vecc\left(X\right)=\col\left(X_1,\dots,X_n\right)$, where $X_i\in \mathds{R}^{m}$ denotes the $i$th column of $X$. Finally, $\otimes$ denotes the Kronecker product, and  $\circ$ denotes the Tracy-Singh product.
%{\color{blue}[SB: is $\circ$ defined? anything else to be included here? {\myr SM+HW+ZR: I have added the definition of $\circ$ in the notation}]}

\section{Problem Formulation}\label{section2}
Consider $N$ agents represented by the following Euler-Lagrange dynamics
\begin{equation}\label{MARINEVESSEL1}
  \mathcal{M}_i\left(q_i\right)\ddot{q}_i+\mathcal{C}_i\left(q_i,\dot{q}_i\right)\dot{q}_i+ {G_i}\left(q_i\right)=\tau_i+  d_i
\end{equation}
where, for each agent $i$, $q_i\in \mathds{R}^n$ is the vector of generalized coordinates,
$\Mathcal{M}_i\left(q_i\right)\in \mathds{R}^{n\times n}$ is the symmetric positive definite inertia matrix, $\Mathcal{C}_i\left(q_i,\dot{q}_i\right)\dot{q}_i\in \mathds{R}^{n}$ is the vector of Coriolis and centripetal forces, ${G_i}\left(q_i\right)\in \mathds{R}^{n}$ is the vector of gravitational force, $\tau_i\in \mathds{R}^{n}$ is the control torque, and $d_i=\col\left(d_{i1},\dots,d_{in}\right)\in \mathds{R}^n$ is the external disturbance, taking the form
\begin{align}\label{exdistur}
d_{is}(t)&=\psi_{is,0}+\sum\nolimits_{k=1}^{n_{is}}\psi_{is,k}\sin(\sigma_{is,k}t+\phi_{is,k}),\\
& \qquad i=1,\dots,N, \ s=1,\dots,n{,} \nonumber
\end{align}
where $\psi_{is,0}, \phi_{is,k}\in \mathds{R}$, $\psi_{is,k}, \sigma_{is,k}\in \mathds{R}_{+}$ are constant biases, initial phases, amplitudes, and frequencies. Biases, initial phases, amplitudes, and frequencies can all be arbitrary and unknown.
In line with most Euler-Lagrange literature \cite{lewis2004Book-robot}, let the dynamics (\ref{MARINEVESSEL1}) satisfy the following properties:
\begin{enumerate}[(1)]
  \item \label{propbound}The inertia matrix $\Mathcal{M}_i\left(q_i\right)$ is symmetric and uniformly positive definite such that $k_{\underline{m}} I \leq \Mathcal{M}_i\left(q_i\right)\leq k_{\overline{m}} I$ {\myr for some
positive scalars $k_{\underline{m}}$ and $k_{\overline{m}}$.} Also, $\|\mathcal{C}_i\left(q_i,\dot{q}_i\right)\|\leq k_c\|\dot{q}_i\|$, and $\|{G_i}\left(q_i\right)\|\leq k_g$ for some positive scalars $k_c$ and $k_g$.
  \item \label{prop1} For all $x,y\in \mathds{R}^n$, $\Mathcal{M}_i\left(q_i\right)x+\Mathcal{C}_i\left(q_i,\dot{q}_i\right)y+G_i\left(q_i\right)=Y_i\left(q_i,\dot{q}_i,x,y\right)\Theta_i$, where $Y_i\left(q_i,\dot{q}_i,x,y\right)\in \mathds{R}^{n\times q}$ is a known regression matrix and $\Theta_i\in \mathds{R}^{q}$ is a constant vector consisting of the uncertain parameters of (\ref{MARINEVESSEL1}).
  \item$\dot{\mathcal{M}}_i\left(q_i\right)-2\Mathcal{C}_i\left(q_i,\dot{q}_i\right)$ is skew symmetric, $\forall q_i,\dot{q}_i\in \mathds{R}^n$.
  %\item $D\left(\eta,\dot{\eta}\right)$ is positive definite and satisfies $\eta^T D\left(\eta,\dot{\eta}\right)\eta>0$, $\forall\eta\neq0$.
\end{enumerate}

Let the agents \eqref{MARINEVESSEL1} interact according to a static directed graph $\mathcal{G}=\{\mathcal{V}, \mathcal{E}, \mathcal{A}\}$ where the vertex set is $\mathcal{V}=\{1,2,\dots, N\}$, and the edge set is $\mathcal{E}\subseteq \mathcal{V}\times \mathcal{V}$. We use $\mathcal{A}=\left[a_{ij}\right]\in \mathbb{R}^{ N \times N}$ to denote the adjacency matrix of graph $\mathcal{G}$, where $a_{ij}>0$ if $\left(j,i\right)\in \mathcal{E}$, and $a_{ij}=0$ otherwise. Let $\mathcal{L} \in \mathds{R}^{N\times N}$ be the Laplacian matrix of graph $\mathcal{G}$, and $\mathcal{N}_i=\{j|(j,i)\in  \mathcal{E}\}$
be the neighbor set of agent $i$. For more details on graph theory, readers are referred to \cite{lewis2014cooperative}.
The following property holds for the Laplacian matrix $\mathcal{L}$:
\begin{lem}\cite{ren2005consensus}\label{WRWB2005} If the communication graph $\mathcal{G}$ contains a spanning tree, then $0$ is a simple eigenvalue of the Laplacian matrix $\mathcal{L}$, and all the other $N - 1$ eigenvalues have positive real parts.
\end{lem}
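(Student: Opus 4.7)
Since this result is cited without proof, let me outline how I would establish it. My plan is to combine a Gershgorin-disc localization, a maximum-principle argument exploiting the spanning-tree hypothesis, and finally a matrix-tree (or equivalent $M$-matrix) argument to upgrade geometric simplicity to algebraic simplicity.

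First I would observe that by construction the rows of $\mathcal{L}$ sum to zero, so $\mathcal{L}\mathds{1}_N = 0$ and $0$ is always an eigenvalue with right eigenvector $\mathds{1}_N$. Next, Gershgorin's disc theorem places every eigenvalue in the union of discs centered at $\mathcal{L}_{ii}=\sum_{j\ne i}a_{ij}\ge 0$ with radius $\sum_{j\ne i}|\mathcal{L}_{ij}|=\mathcal{L}_{ii}$, so each disc lies in the closed right half-plane and meets the imaginary axis only at the origin. This already forces every eigenvalue to have non-negative real part and shows that $0$ is the only possible eigenvalue on the imaginary axis.

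Then I would prove $\ker(\mathcal{L})=\mathrm{span}(\mathds{1}_N)$ via a discrete maximum principle. If $\mathcal{L}x=0$, the row-wise identity $\sum_{j\ne i}a_{ij}(x_i-x_j)=0$ combined with $a_{ij}\ge 0$ implies that the argmax set $S=\{k:\,x_k=\max_\ell x_\ell\}$ is closed under taking in-neighbors, and applying the same argument to $-x$ shows the argmin set $T$ is also closed under taking in-neighbors. The spanning-tree hypothesis provides a root $r$ reaching every vertex along some directed path; tracing such a path backwards from any node in $S$ (resp.\ $T$) forces $r\in S$ (resp.\ $r\in T$). Consequently $\max_\ell x_\ell = x_r = \min_\ell x_\ell$, so $x$ is a scalar multiple of $\mathds{1}_N$.

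Finally, to obtain algebraic simplicity I would inspect the coefficient of $\lambda$ in $\det(\lambda I-\mathcal{L})$, which equals $(-1)^{N-1}\sum_{i=1}^N \det(\mathcal{L}_{-i})$, where $\mathcal{L}_{-i}$ denotes the principal submatrix obtained by deleting row and column $i$. The matrix-tree theorem for directed graphs identifies $\det(\mathcal{L}_{-i})$ with a non-negative weighted count of spanning trees rooted at $i$; taking $i=r$ (the root supplied by the hypothesis) yields a strictly positive term, so the whole sum is nonzero and $0$ is an algebraically simple eigenvalue. The main obstacle I anticipate is precisely this last step: the Gershgorin bound and the max/min closure argument amount to routine book-keeping, but promoting geometric simplicity of $0$ to algebraic simplicity requires either the matrix-tree theorem or an equivalent decomposition of $\mathcal{L}$ into blocks associated with its strongly connected components together with a Perron--Frobenius argument on the source component.
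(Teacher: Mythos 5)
The paper itself offers no proof of this lemma --- it is imported verbatim from the cited reference \cite{ren2005consensus} --- so there is nothing internal to compare your argument against; what you have written is a correct, self-contained derivation. Your three ingredients all check out under the paper's conventions: the Gershgorin discs are centered at $\mathcal{L}_{ii}=\sum_{j\neq i}a_{ij}$ with radius equal to the center, so they confine the spectrum to the closed right half-plane and meet the imaginary axis only at the origin; the max/min closure argument correctly exploits the fact that the root $r$ of the spanning tree is an in-neighbor ancestor of every vertex, giving $\ker(\mathcal{L})=\mathrm{span}(\mathds{1}_N)$; and the coefficient of $\lambda$ in the characteristic polynomial is indeed $(-1)^{N-1}\sum_i\det(\mathcal{L}_{-i})$, with the directed matrix-tree theorem making $\det(\mathcal{L}_{-r})$ strictly positive. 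The one point you should pin down explicitly is the orientation convention in that last step: with $a_{ij}>0$ iff $(j,i)\in\mathcal{E}$, the diagonal of $\mathcal{L}$ carries weighted \emph{in}-degrees, and then $\det(\mathcal{L}_{-i})$ counts arborescences with edges directed \emph{away from} $i$ --- which is exactly the ``spanning tree rooted at $r$'' of the hypothesis, so the term for $i=r$ is positive as you claim; with the opposite convention the rooting flips and the argument would silently fail. Note also that once you have Gershgorin plus algebraic simplicity, your kernel computation is logically redundant (though it is the natural warm-up); conversely, the cited literature typically reaches the same conclusion via your proposed alternative --- block-triangularizing $\mathcal{L}$ by strongly connected components and applying Perron--Frobenius to the unique source component --- which avoids the matrix-tree theorem at the cost of the SCC decomposition. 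Either route is sound.
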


\begin{prob}[Leaderless Consensus Problem]\label{ldlesp}Consider the networked Euler-Lagrange systems (\ref{MARINEVESSEL1}) with communication graph $\mathcal{G}$. Find a distributed control law
such that, for any external
disturbance with arbitrary $\psi_{is,0}$, $\psi_{is,k}$, $\phi_{is,k}$ and $\sigma_{is,k}$ as in \eqref{exdistur}, and arbitrary initial conditions $q_i(0)$ and $\dot{q}_i(0)$,
the trajectories $q_i(t)$ and $\dot{q}_i(t)$ exist and are bounded for all $t\geq0$, and the following consensus results are achieved,
$$\lim\limits_{t\rightarrow\infty}\left(q_i\left(t\right)-q_j\left(t\right)\right)=0,~~\lim\limits_{t\rightarrow\infty}\left(\dot{q}_i\left(t\right)-\dot{q}_j\left(t\right)\right)=0, \forall i, j.$$
\end{prob}

To solve Problem \ref{ldlesp}, we need the following assumption, which is a standard assumption for directed static communication graphs \cite{ren2005consensus}.
\begin{ass}\label{ass2}
The communication graph $\mathcal{G}$ contains a spanning tree.
\end{ass}

\begin{rem}\label{rem1} {\myr Under Assumption 1, for} the Laplacian matrix $\mathcal{L}\in \mathds{R}^{N\times N}$ of the communication graph $\mathcal{G}$,
 there exists a nonsingular matrix $U\in \mathds{C}^{N\times N}$ such that $U^{-1}\mathcal{L}U=J_{\mathcal{L}}$, where $J_{\mathcal{L}}$ is the Jordan canonical form of $\mathcal{L}$. In the following, let us denote $\lambda_{1}$ as the nonzero minimum real part among the eigenvalues of $\mathcal{L}$.
\end{rem}
%%==========================================================
\section{{Distributed observer and dynamic compensator}}\label{section3}
%%==========================================================

In this section, a distributed observer is designed for each agent so that all these observers will achieve consensus to  {\myr an autonomous system determined by the inherent properties and the initial states of the agents}. Additionally,  an internal model based dynamic compensator is designed to deal with the uncertain disturbances.

%%====================================================================
\subsection{{Design of a distributed observer}}
We propose a distributed observer for each agent as follows:
 \begin{subequations}\label{sweq1}
\begin{align}
\dot{S}_i&= \mu_1\sum\nolimits_{j\in \mathcal{N}_i}{a_{ij}(S_j-S_i)}\label{sweq1i}\\
\dot{\eta}_i &=S_i\eta_i + \mu_2\sum\nolimits_{j\in \mathcal{N}_i} {a_{ij}(\eta_j-\eta_i)}\label{sweq1ii}
 \end{align}
\end{subequations}
where $S_i\in \mathds{R} ^{n\times n}$ and $\eta_i\in \mathds{R} ^{n}$ {\myr are the estimated system matrix and state of the autonomous system, respectively}.
%Please note that this autonomous system is not pre-specified but arising from the inherent
%properties and the initial states of the agents

The main difference between \eqref{sweq1} and other adaptive distributed observers in the literature, e.g. \cite{cai2016leader,cai2017adaptive} is that the adaptive distributed observers in \cite{cai2016leader,cai2017adaptive} require an explicit leader agent, generating an a priori
reference trajectory for the network, while \eqref{sweq1} requires no leader agent and all agents works cooperatively to construct {\hw an autonomous system}.

%
%is the presence of an explicit leader agent as in \cite{cai2016leader,cai2017adaptive} (generating a priori reference trajectory for the network), or reference generators with a priori-given
%system matrix {\myb to achieve leaderless consensus} as in \cite{wieland2011internal}. On the other hand, \eqref{sweq1} requires no leader agent and no a priori-given system matrix: all agents works cooperatively
%\end{rem}
In the following development, {\hw we shall show how to construct  an autonomous system} by the proposed observer \eqref{sweq1}. To this purpose, a technical lemma is needed.

\begin{lem}\label{lema1}Consider the system
\begin{align}\dot{x}=F(t)x, \label{vFx}\end{align}
where $x\in \mathds{R}^{n}$, and $F(\cdot): \mathds{R}\rightarrow \mathds{R}^{n\times n}$ is bounded and piecewise continuous for all $t\geq0$. If $F(t)$ {vanishes} exponentially, then {$x$ converges} to a bounded vector.
\end{lem}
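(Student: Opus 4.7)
The plan is a two-step Grönwall plus Cauchy argument. First I would establish that $x(t)$ stays bounded on $[0,\infty)$, and then I would show that $x(t)$ is in fact Cauchy in time, which combined with boundedness gives convergence to a bounded limit vector.

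For the boundedness step, let $\|F(t)\|\le c e^{-\alpha t}$ with $c,\alpha>0$ by hypothesis of exponential vanishing. Integrating the differential equation gives
\begin{equation*}
x(t)=x(0)+\int_{0}^{t}F(s)x(s)\,ds,
\end{equation*}
so taking norms and applying the standard Grönwall inequality yields
\begin{equation*}
\|x(t)\|\le \|x(0)\|\exp\!\Big(\int_0^t \|F(s)\|\,ds\Big)\le \|x(0)\|\,e^{c/\alpha},
\end{equation*}
since $\int_0^\infty c e^{-\alpha s}\,ds=c/\alpha<\infty$. Hence there exists $M>0$ with $\|x(t)\|\le M$ for all $t\ge 0$.

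For the convergence step, I would use the Cauchy criterion. For any $t_2>t_1\ge 0$,
\begin{equation*}
\|x(t_2)-x(t_1)\|=\Big\|\int_{t_1}^{t_2}F(s)x(s)\,ds\Big\|\le M\int_{t_1}^{\infty}\|F(s)\|\,ds\le \frac{Mc}{\alpha}\,e^{-\alpha t_1}.
\end{equation*}
The right-hand side tends to zero as $t_1\to\infty$, so $\{x(t)\}$ satisfies the Cauchy property and therefore converges to some limit $x^\star\in\mathbb{R}^n$. Since $\|x^\star\|\le M$, the limit is bounded, which concludes the proof.

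There is no real obstacle here: the argument is essentially a textbook consequence of absolute integrability of $\|F(\cdot)\|$ on $[0,\infty)$. The only thing to double-check is that exponential vanishing is used only through the finiteness of $\int_0^\infty \|F(s)\|\,ds$ and through the decay of its tail $\int_{t_1}^\infty \|F(s)\|\,ds\to 0$; in fact, the same proof would go through under the weaker assumption that $F(\cdot)$ is absolutely integrable on $[0,\infty)$, but since the lemma is stated with exponential decay, the bounds above can be made fully explicit.
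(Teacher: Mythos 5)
Your proof is correct and follows essentially the same route as the paper: a Gr\"onwall-type bound exploiting $\int_0^\infty\|F(s)\|\,ds<\infty$ to get boundedness of $x$ (the paper does this via $V=x^Tx$ and a differential inequality, which is equivalent), followed by convergence from the decay of the tail of $\|F\|$. If anything, your explicit Cauchy estimate makes rigorous the final step that the paper's proof dispatches with ``Clearly, there exists an $x^*$,'' so no changes are needed.
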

\begin{proof} Since $F(t)$ vanishes exponentially, there exist positive {constants} $\alpha$ and $\lambda$, such that $\|F(t)\|\leq \alpha e^{-\lambda t}$. Let $V=x^Tx$. The time derivative of $V$ along the system \eqref{vFx} is
\begin{align*}
% \nonumber to remove numbering (before each equation)
  \dot{V}% &= x^T(t)\dot{x}(t)+\dot{x}^T(t)x(t) \nonumber\\
  &=x^T\left(F(t)+F^T(t)\right)x \nonumber\\
   %&\leq 2\alpha e^{-\lambda t}x^Tx\nonumber\\
   &\leq2\alpha e^{-\lambda t}V.
\end{align*}
Then, $\forall t \geq 0$,
\begin{align*}
V(t)&\leq e^{\int_{0}^{t}2\alpha e^{-\lambda \tau}d\tau}V\left(0\right)\nonumber\\
&\leq e^{\frac{2\alpha}{\lambda}}\|x\left(0\right)\|^2,
\end{align*}
which implies that $\|x(t)\|$ is bounded {for all} $x(0)$ and $t\geq 0$. Hence, for system \eqref{vFx}, $F(t)x$ will converge to zero exponentially at the rate of $\lambda$. Clearly, there exists {an} $x^*\in \mathds{R}^{n}$ such that {$\lim_{t\rightarrow \infty}x \left(t\right)= x^*$ exponentially} at the rate of $\lambda$.
%
%Furthermore, define $M=e^{\frac{\alpha}{\lambda}}\|x\left(0\right)\|$. Since $\lim\limits_{t\rightarrow\infty}\alpha e^{-\lambda t}t=0$, which is equivalent to $\forall\varepsilon$, let $\varepsilon_1=\frac{\varepsilon}{2\alpha M}$. $\exists T_1$ such that $\forall t>T_1$, $\|\alpha e^{-\lambda t}t\|<\varepsilon_1$. $\forall \varepsilon$ and $t_2>t_1> T_1$ and with the mean value theorem, there exist $\bar{t}\in [t_1,~ t_2]$ such that,
%\begin{align}
% %\nonumber to remove numbering (before each equation)
% \left\|x(t_1)-x(t_2)\right\| \leq& \left\|\dot{x}(\bar{t})\right\|\left\|\bar{t}-t_1\right\|\nonumber\\
% \leq& \left\|2\dot{x}(\bar{t})\right\| \left\|\bar{t}\right\| \nonumber\\
%    =& \left\|2F(\bar{t})x\left(\bar{t} \right)\right\| \left\|\bar{t}\right\|\nonumber\\
%    \leq& 2\alpha e^{-\lambda \bar{t}}\bar{t}\|x\left(\bar{t}\right)\| \nonumber\\
%    \leq& 2\alpha e^{-\lambda \bar{t}}\bar{t} M\nonumber\\
%    \leq&\varepsilon.
%\end{align}
%Thus, $\lim\limits_{t\rightarrow \infty}x\left(t\right)$ exists and with $\lim\limits_{t\rightarrow \infty}\|x\left(t\right)\|\leq  e^{\frac{\alpha}{\lambda}}\|x\left(0\right)\| $.
\end{proof}

\begin{rem}\myr A related result is reported in Lemma 1 of \cite{cai2017adaptive}. However, Lemma 1 in \cite{cai2017adaptive} considers the system $\dot{x}=F_0x+F(t)x$, where matrix $F_0$ needs to be Hurwitz, proving that $x$ converges to zero.  Clearly, the system \eqref{vFx} in the proposed Lemma \ref{lema1} cannot be covered by \cite{cai2017adaptive}, due to the absence of the Hurwitz matrix $F_0$.
\end{rem}
Now we are ready to show the consensus of dynamics \eqref{sweq1}.

\begin{lem}\label{lema2}
Consider the dynamics (\ref{sweq1i}). Under Assumption \ref{ass2}, for any positive $\mu_1$ and any initial $S_i\left(0\right)$, the matrix signals $S_i(t)$ will achieve consensus exponentially, for $i=1,\dots,N$.
\end{lem}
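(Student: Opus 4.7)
The plan is to reduce the matrix-valued dynamics \eqref{sweq1i} to a standard linear consensus problem by stacking, and then invoke the spectral properties of the Laplacian guaranteed by Lemma \ref{WRWB2005}. Concretely, I would first stack the matrix states as $\mathbf{S}=\col(S_1,\dots,S_N)\in\mathds{R}^{Nn\times n}$ and rewrite the network dynamics in the compact Kronecker form
\begin{equation*}
\dot{\mathbf{S}}=-\mu_1(\mathcal{L}\otimes I_n)\mathbf{S},
\end{equation*}
which is possible because each column of $S_i$ evolves independently under the same scalar consensus protocol, so the matrix-valued nature of $S_i$ is handled for free.

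Next I would exploit the Jordan decomposition introduced in Remark \ref{rem1}: there exists nonsingular $U$ with $U^{-1}\mathcal{L}U=J_{\mathcal{L}}$. By Lemma \ref{WRWB2005}, under Assumption \ref{ass2} the matrix $J_{\mathcal{L}}$ has a single $1\times 1$ zero block (with $\mathbf{1}_N$ as the associated right eigenvector, which I take as the first column of $U$) and all remaining Jordan blocks have eigenvalues with real parts bounded below by $\lambda_1>0$. Applying the change of coordinates $\tilde{\mathbf{S}}=(U^{-1}\otimes I_n)\mathbf{S}$, the transformed system decouples into $\dot{\tilde{\mathbf{S}}}=-\mu_1(J_{\mathcal{L}}\otimes I_n)\tilde{\mathbf{S}}$. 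The first block component is constant, equal to $\omega^{T}\mathbf{S}(0)=\sum_{j=1}^N\omega_j S_j(0)$, where $\omega^{T}$ denotes the first row of $U^{-1}$ (the normalized left eigenvector of $\mathcal{L}$ for the zero eigenvalue). The remaining blocks satisfy a linear ODE whose system matrix has all eigenvalues with real parts at least $\mu_1\lambda_1$, hence they decay exponentially with rate at least $\mu_1\lambda_1$ (standard bound for a Jordan block with a polynomial prefactor, which does not affect exponential decay).

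Transforming back via $\mathbf{S}=(U\otimes I_n)\tilde{\mathbf{S}}$, and using the fact that the first column of $U$ is $\mathbf{1}_N$, I would conclude
\begin{equation*}
\mathbf{S}(t)-\mathbf{1}_N\otimes\bar{S}\;\longrightarrow\;0 \quad\text{exponentially, with rate at least }\mu_1\lambda_1,
\end{equation*}
where $\bar{S}=\sum_{j=1}^N\omega_j S_j(0)$. In particular, $S_i(t)\to\bar{S}$ exponentially for every $i$, so $S_i(t)-S_j(t)\to 0$ exponentially for all $i,j$, establishing exponential consensus. I do not anticipate a genuine obstacle here: the one subtlety is confirming that the matrix-valued dynamics can really be handled through the Kronecker stacking (which it can, since the Laplacian acts column-wise on $\mathbf{S}$), and that the polynomial factors coming from non-trivial Jordan blocks of $\mathcal{L}$ do not spoil exponential convergence—this is standard.
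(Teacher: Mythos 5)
Your proposal is correct and follows essentially the same route as the paper's proof: stack the $S_i$ into $\bar S=\col(S_1,\dots,S_N)$, write $\dot{\bar S}=-\mu_1(\mathcal{L}\otimes I_n)\bar S$, pass to Jordan coordinates via $U^{-1}\mathcal{L}U=J_{\mathcal{L}}$ so that the zero-eigenvalue block is constant and the remaining blocks decay at rate $\mu_1\lambda_1$, then transform back using $\mathds{1}_N$ as the first column of $U$ to identify the common limit $\sum_{j}u_jS_j(0)$ (the paper's $\Phi_1(0)$, cf.\ Remark \ref{rem1chi}). Your explicit remark that the polynomial prefactors from nontrivial Jordan blocks do not spoil exponential decay is a point the paper leaves implicit, but otherwise the arguments coincide.
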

\begin{proof}
For {notational conciseness}, define $\bar{S}=\col\left(S_{1},\dots,S_{N}\right)$. Then, we can rewrite the dynamics (\ref{sweq1i}) {in a compact way}
 %\begin{subequations}\label{eS1}
 \begin{align}
 \dot{\bar{S}}=&-\mu_1\left(\mathcal{L}\otimes I_n\right)\bar{S}.\label{eS1a}
 \end{align}
 %\end{subequations}
 By Remark \ref{rem1}, let $\Phi=\left(U^{-1}\otimes I_n\right)\bar{S} \in \mathds{C}^{Nn\times n}$. Then, equation {\eqref{eS1a}} can be rewritten as
  \begin{align}\label{eS2}
  \dot{\Phi}=-\mu_1\left(J_\mathcal{L}\otimes I_n\right)\Phi,
  \end{align}
where $J_{\mathcal{L}}$ is the Jordan canonical form of $\mathcal{L}$. Since the graph $\mathcal{G}$ contains a spanning tree,   we have, from Lemma $\ref{WRWB2005}$, that $0$ is a simple eigenvalue of $J_{\mathcal{L}}$, and all other $N - 1$ eigenvalues have positive real parts. For convenience, let us rearrange
$$J_{\mathcal{L}}=\mbox{block diag}\left(0,J_{N-1}\right),$$
where $J_{N-1}\in \mathds{C}^{(N-1)\times (N-1)}$ consists of the last $(N-1)$ rows and the last $(N-1)$ columns of the matrix  $J_{\mathcal{L}}$. Let $\Phi=\col\left(\Phi_1,\Psi\right)$ and $\Psi=\col\left(\Phi_2\dots,\Phi_N\right)$, where {$\Phi_i\in \mathds{C}^{n\times n}$  for $i=1,\dots,N$}. Then, system \eqref{eS2} can be {rewritten} as
\begin{subequations}
\begin{align}
\dot{\Phi}_1&=0 I_n, \label{eS3-a}\\
\dot{\Psi}&=-\mu_1\left(J_{N-1}\otimes I_n\right)\Psi{.} \label{eS3}
\end{align}\end{subequations}
From equation \eqref{eS3}, and the properties of $J_{N-1}$, we obtain $\lim_{t\rightarrow\infty }\Psi(t)= 0$ exponentially with decay rate $\mu_1 \lambda_{1}$, which implies $$\lim_{t\rightarrow \infty}\Phi(t)=\col\left(\Phi_1(0),0_{(N-1)n\times n}\right)$$ exponentially. Thus, $$\lim_{t\rightarrow \infty} \bar{S}(t)=\left(U\otimes I_n\right)\col\left(\Phi_1(0),0_{(N-1)n\times n}\right)$$ exponentially.
Let $\mathds{1}_N$
be the eigenvector associated to the $0$ simple eigenvalue of $\mathcal{L}$. Then, arrange $U$ so that its first column is $\mathds{1}_N$. Thus, for any positive $\mu_1$ and any initial $S_i\left(0\right)\in \mathds{R}^{n\times n}$,
$\lim_{t\rightarrow\infty }\bar{S}(t)=\left(\mathds{1}_N\otimes \Phi_1(0)\right)$ exponentially, i.e., $\lim_{t\rightarrow\infty }S_i(t)= \Phi_1(0)$, $\forall i$ with decay rate $\mu_1 \lambda_{1}$.
\end{proof}
\begin{rem}\label{rem1chi}
After denoting the first row of  $U^{-1}$ as $u^{T}=\textnormal{\col}\left(u_1,\dots,u_N\right)$, which is a left eigenvector of $\mathcal{L}$ associated with eigenvalue $0$ and belongs to $\mathds{R}^n$. Then, the following equality holds
\begin{align*}
\Phi(0)&=\left(U^{-1}\otimes I_n\right)\bar{S}(0)\\
&=\textnormal{\col}\left(\Phi_1(0),\Phi_2(0),\dots,\Phi_N(0)\right).
\end{align*}
Thus, $ \Phi_1(0)=\sum_{i=1}^{N}u_iS_i(0)$. Denote $S^{*}=\Phi_1(0)$, which can be treated as the system dynamics of the {\myr autonomous system determined by the initial conditions of each agent and communication network}.
\end{rem}
{Next, we show that dynamics \eqref{sweq1ii} achieve consensus to the state of the {\myr autonomous system} constructed by all the agents through communication network.}

\begin{lem}\label{lema2i}Consider the {dynamics \eqref{sweq1ii}} with an arbitrary $\eta_i(0)$. Under Assumption \ref{ass2}, for sufficiently large $\mu_1$ and $\mu_2$, the signals $\eta_i(t)$ achieve consensus exponentially, for $i=1,\dots,N$.
%$\bar{S}\rightarrow \mathds{1}_N\otimes S$ exponentially as $t\rightarrow \infty$ and $\eta \rightarrow  \mathds{1}_N\otimes e^{St}\tilde{\eta}$ as $t\rightarrow\infty$, where $S$ and $\tilde{\eta}$ is the final consensus system matrix and consensus state.
\end{lem}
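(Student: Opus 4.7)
The plan is to follow the same Jordan-decomposition recipe used for Lemma \ref{lema2}, but with the added ingredient that each $S_i(t)$ is treated as an exponentially vanishing perturbation of a common (but possibly non-Hurwitz) matrix $S^{*}$. Concretely, let $\bar\eta=\col(\eta_1,\dots,\eta_N)$ and, using Lemma \ref{lema2} together with Remark \ref{rem1chi}, write $S_i(t)=S^{*}+\tilde S_i(t)$ with $\|\tilde S_i(t)\|\le\alpha e^{-\mu_1\lambda_1 t}$. The stacked form of \eqref{sweq1ii} then reads
\begin{equation*}
\dot{\bar\eta}=\bigl(I_N\otimes S^{*}-\mu_2(\mathcal{L}\otimes I_n)\bigr)\bar\eta+\tilde\Sigma(t)\bar\eta,
\end{equation*}
with $\tilde\Sigma(t):=\mbox{block diag}(\tilde S_1,\dots,\tilde S_N)$ decaying exponentially. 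I would then apply $\Phi_\eta:=(U^{-1}\otimes I_n)\bar\eta$ and partition $\Phi_\eta=\col(\Phi_{\eta,1},\Psi_\eta)$ consistently with $J_\mathcal{L}=\mbox{block diag}(0,J_{N-1})$, noting that $I_N\otimes S^{*}$ commutes with $\mathcal{L}\otimes I_n$ so the transformation block-diagonalises the $\mu_2$-part exactly. This yields a ``consensus mode'' whose leading-order dynamics is $\dot\Phi_{\eta,1}=S^{*}\Phi_{\eta,1}+(\text{exp.\ vanishing})\Phi_\eta$ and a ``disagreement mode''
\begin{equation*}
\dot\Psi_\eta=A_{\mu_2}\Psi_\eta+(\text{exp.\ vanishing})\Phi_\eta,\quad A_{\mu_2}:=I_{N-1}\otimes S^{*}-\mu_2(J_{N-1}\otimes I_n).
\end{equation*}

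The next step is to make $A_{\mu_2}$ Hurwitz. Its spectrum consists of $\{\nu_k-\mu_2\sigma_j\}$, where the $\nu_k$ are the eigenvalues of $S^{*}$ and the $\sigma_j$ those of $J_{N-1}$; by Lemma \ref{WRWB2005}, every $\sigma_j$ has real part at least $\lambda_1>0$, so any $\mu_2$ with $\mu_2\lambda_1$ exceeding the largest real part of the $\nu_k$ renders $A_{\mu_2}$ Hurwitz and guarantees $\|e^{A_{\mu_2}t}\|\le c\,e^{-\gamma_2 t}$ for some $\gamma_2>0$.

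The part I expect to be the main obstacle, and the reason both $\mu_1$ and $\mu_2$ must be taken large, is that the forcing in the $\Psi_\eta$ equation is multiplied by $\Phi_{\eta,1}$, which need not remain bounded since $S^{*}$ is not assumed Hurwitz. To handle this, I would first derive an a-priori crude exponential bound $\|\bar\eta(t)\|\le c_0 e^{\kappa t}$ by applying Gronwall's inequality directly to the stacked equation, using only $\|\tilde\Sigma(t)\|\le\alpha$; here $\kappa$ depends only on $\|S^{*}\|$, $\mu_2\|\mathcal{L}\|$ and $\alpha$, and therefore is fixed once $\mu_2$ is fixed. Variation of constants on the $\Psi_\eta$ subsystem then yields an estimate of the form
\begin{equation*}
\|\Psi_\eta(t)\|\le c\,e^{-\gamma_2 t}\|\Psi_\eta(0)\|+c\,\alpha\,c_0\!\int_0^{t}\! e^{-\gamma_2(t-s)}e^{(\kappa-\mu_1\lambda_1)s}\,ds,
\end{equation*}
and a routine computation shows the right-hand side decays exponentially as soon as $\mu_1\lambda_1>\kappa$, which can always be arranged by taking $\mu_1$ sufficiently large. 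Finally, since the first column of $U$ is $\mathds{1}_N$, exponential decay of $\Psi_\eta$ translates into $\bar\eta(t)-\mathds{1}_N\otimes\Phi_{\eta,1}(t)\to 0$ exponentially, i.e.\ $\eta_i(t)-\eta_j(t)\to 0$ exponentially for all $i,j$, which is the asserted consensus.
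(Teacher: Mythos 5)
Your argument is correct, but it follows a genuinely different route from the paper's. The paper does not Jordan-decompose the $\eta$-dynamics at all: it applies the time-varying change of coordinates $\hat{\eta}=e^{Qt}\eta$ with $Q=\mu_2(\mathcal{L}\otimes I_n)-(I_N\otimes S^{*})$, which turns \eqref{sweq1ii} into $\dot{\hat{\eta}}=F(t)\hat{\eta}$ with $F(t)=e^{Qt}\bigl[\hat S_d(t)-I_N\otimes S^{*}\bigr]e^{-Qt}$; it then bounds $\|e^{\pm Qt}\|\le e^{(\mu_2\|\mathcal{L}\|+\|S^{*}\|)t}$, invokes Lemma \ref{lema1} (which exists precisely to serve this step) to conclude $\hat{\eta}(t)\to\hat{\eta}^{*}$ once $\mu_1\lambda_1\ge 2(\mu_2\|\mathcal{L}\|+\|S^{*}\|)$, and finally undoes the transformation to compare $\eta(t)$ with the explicit trajectory $\eta_0(t)=\mathds{1}_N\otimes(e^{S^{*}t}\chi^{*})$, requiring additionally $\mu_2>\|S^{*}\|/\lambda_1$. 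Your decomposition into a consensus mode $\Phi_{\eta,1}$ and a disagreement mode $\Psi_\eta$ (legitimate because $I_N\otimes S^{*}$ commutes with $\mathcal{L}\otimes I_n$, and because the non-commuting part $\tilde\Sigma(t)$ decays), combined with the a-priori Gronwall bound $\|\bar\eta(t)\|\le c_0e^{\kappa t}$ and variation of constants on the Hurwitz matrix $A_{\mu_2}$, correctly isolates the real difficulty (the possibly unbounded forcing $\Phi_{\eta,1}$) and yields consensus under a condition of the same flavour, $\mu_1\lambda_1>\kappa$ with $\kappa$ fixed once $\mu_2$ is fixed. What your route buys is a cleaner mode separation and no need for the auxiliary Lemma \ref{lema1} or for bounding $e^{\pm Qt}$; what it does not deliver, and what the paper's route does, is the explicit identification of the common limit trajectory $e^{S^{*}t}\chi^{*}$, which the main theorem later relies on (boundedness of $\dot\eta_0$ and the limit of $S_i\dot\eta_i$). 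That piece of information is recoverable from your consensus-mode equation $\dot\Phi_{\eta,1}=S^{*}\Phi_{\eta,1}+(\text{exp.\ vanishing})\Phi_\eta$ by one more variation-of-constants step, so nothing is broken, but it is worth stating if the lemma is to be used as in the paper. One minor technicality shared with the paper: the bound $\|\tilde S_i(t)\|\le\alpha e^{-\mu_1\lambda_1 t}$ should strictly read rate $\mu_1\lambda_1-\varepsilon$ when $J_{N-1}$ has nontrivial Jordan blocks.
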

\begin{proof}
For notational conciseness, let $\eta=\col\left(\eta_1,\dots,\eta_N\right)$ and $\hat{S}_d=\mbox{block diag} \left(S_{1},\dots,S_{N}\right)$, Then, we can put \eqref{sweq1ii} into the following compact form
 \begin{align}\dot{\eta}=&\big[\hat{S}_d- \mu_2\left( \mathcal{L}\otimes I_n\right)\big]\eta.\label{eS1b}\end{align}
 Perform the following transformation
\begin{align}\label{transeta}
\hat{\eta}=P(t)\eta,
\end{align}
where $P(t)=e^{Qt}$ and $Q=\mu_2\left( \mathcal{L}\otimes I_n\right)-\left(I_N\otimes S^{*}\right)$.
The time derivative of $\hat{\eta}$ along the trajectory \eqref{eS1b} is
\begin{align}\label{sweq103}
% \nonumber to remove numbering (before each equation)
    \dot{\hat{\eta}}%=&\dot{P}(t)\eta(t) + P(t)\dot{\eta}(t)\nonumber\\
   %%=&Q\hat{\eta} +  P(t)\big[\hat{S }_d- \mu_2\left( \mathcal{L}\otimes I_n\right)\big]\eta\nonumber\\
   %=&Q\hat{\eta}+  P(t)\big[\hat{S }_d-\left(I_N\otimes S^{*}\right)-Q\big]\eta\nonumber\\
  % =&Q\hat{\eta}+  P(t)\big[\hat{S }_d-\left(I_N\otimes S^{*}\right)\big]\eta-P(t)Q\eta\nonumber\\
   %=&Q\hat{\eta}+  P(t)\big[\hat{S }_d-\left(I_N\otimes S^{*}\right)\big]P^{-1}(t)P(t)\eta-QP(t)\eta\nonumber\\
  % =&Q\hat{\eta}+  P(t)\big[\hat{S }_d-\left(I_N\otimes S^{*}\right)\big]P^{-1}(t)\hat{\eta}-Q\hat{\eta}\nonumber\\
   =& P(t)\big[\hat{S}_d(t)-\left(I_N\otimes S^{*}\right)\big] P^{-1}(t)\hat{\eta}\nonumber\\
   =&e^{Qt}\big[\hat{S}_d(t)-\left(I_N\otimes S^{*}\right)\big] e^{-Qt}\hat{\eta}\nonumber\\
   =& F(t)\hat{\eta}.
\end{align}
We know from Lemma \ref{lema2} that $\lim_{t\rightarrow\infty}S_i(t)= S^{*}$ exponentially with decay rate $\mu_1\lambda_1$. Note that $\|e^{Q t}\|$ and $\|e^{-Q t}\|$ are upper bounded by $e^{(\mu_2\|\mathcal{L}\|+\|S^{*}\|)t}$.
Therefore, we have $\lim_{t\rightarrow\infty}F(t)= 0$ exponentially for $$\mu_1\geq 2(\mu_2\|\mathcal{L}\|+\|S^{*}\|)/\lambda_1.$$ Then, by Lemma \ref{lema1}, for any initial states $\hat{\eta}\left(0\right)\in \mathds{R}^{Nn}$, $\hat{\eta}\left(t\right)$ converges to a bounded vector $\hat{\eta}^{*}=\col\left(\hat{\eta}^{*}_1,\dots,\hat{\eta}^{*}_N\right)$, %$\forall i=1,\dots,N$,
 $\hat{\eta}^{*}_i\in \mathds{R}^{n}$. Since graph $\mathcal{G}$ contains a spanning tree, for any positive $\mu_2$ and any initial $\hat{\eta}\left(0\right)$, we have from Lemma $\ref{WRWB2005}$ that
\begin{align}\label{transeta0}
\lim \limits_{t\rightarrow\infty} e^{- \mu_2\left( \mathcal{L}\otimes I_n\right)t}\hat{\eta}\left(t\right) =&\lim \limits_{t\rightarrow\infty} e^{- \mu_2\left( \mathcal{L}\otimes I_n\right)t} \lim \limits_{t\rightarrow\infty}\hat{\eta}\left(t\right)\nonumber\\
%&=\lim \limits_{t\rightarrow\infty}\left(  U e^{- \hw{\mu_2}J_\mathcal{L}t}U^{-1}\otimes I_n \right) \lim \limits_{t\rightarrow\infty}\hat{\eta}\left(t\right)\nonumber\\
%&=\lim \limits_{t\rightarrow\infty}\left(U\otimes I_n\right) e^{- \hw{\mu_2}\left( J_\mathcal{L}\otimes I_n\right)t} \left(U^{-1}\otimes I_n\right)\lim \limits_{t\rightarrow\infty}\hat{\eta}\left(t\right)\nonumber\\
%&=\lim \limits_{t\rightarrow\infty}\left(  U e^{- \hw{\mu_2}J_\mathcal{L}t}U^{-1}\otimes I_n \right) \hat{\eta}^{*}\nonumber\\
 %&=\left(\left(\left[
%     \begin{array}{cc}
%       \mathds{1}_N  & 0_{N\times (N-1)}\\
%     \end{array}
%   \right]U^{-1}\right)\otimes I_n\right)
% \hat{\eta}^{*}\nonumber\\
  =&\mathds{1}_N\otimes \chi^*
\end{align}
where $\chi^*=\sum_{i=1}^{N}u_i\hat{\eta}^{*}_{i}$ and $u_i$ is defined in Remark \ref{rem1chi}.
 Let \begin{align}\eta_0(t)=\mathds{1}_N\otimes \big(e^{S^*t}\chi^*\big).\label{consesusprool}\end{align}
According to \eqref{transeta}, we have, $$\eta(t)=e^{-Qt}\hat{\eta}(t) =e^{\left(I_N\otimes S^{*}\right)t} e^{-\mu_2\left(  \mathcal{L}\otimes I_n\right)t}\hat{\eta}(t).$$
Since $\left\|e^{\left(I_N\otimes S^{*}\right)t}\right\|\leq e^{\|S\|t}$,
\begin{align*}
\left\|\eta(t)-\eta_0(t)\right\|&=\left\|e^{\left(I_N\otimes S^{*}\right)t}\left[ e^{-\mu_2\left(  \mathcal{L}\otimes I_n\right)t}\hat{\eta}(t)-\mathds{1}_N\otimes \chi^*\right]\right\|\\
&\leq e^{\|S^{*}\|t}\left\|e^{-\mu_2\left(  \mathcal{L}\otimes I_n\right)t}\hat{\eta}(t)-\mathds{1}_N\otimes \chi^*\right\|.
\end{align*}
Considering \eqref{transeta0}, the exponentially decay rate is $\mu_2 \lambda_{1}$. Then, we have
\begin{align*}
\left\|\eta(t)-\eta_0(t)\right\|&\leq e^{\|S^{*}\|t}e^{-\mu_2 \lambda_{1} t}\nonumber\\
&=e^{-(\mu_2 \lambda_{1}-\|S^{*}\|) t}.
\end{align*}
Hence, for $i,j \in \mathcal{N}$ and $\mu_2>\frac{\|S^{*}\|}{\lambda_{1}}$,
\begin{equation}
\lim\limits_{t\rightarrow\infty}\left(\eta_i\left(t\right)-\eta_j\left(t\right)\right)=0 \label{conzero}
\end{equation}
exponentially. This further implies that $$\lim\limits_{t\rightarrow\infty}(\eta_i(t)- e^{S^*t}\chi^*)=0$$ exponentially for all $i$.
\end{proof}
\begin{rem}
{\myr  Note that the convergence analysis of Lemma \ref{lema2i} does not require the consensus state to be bounded, whereas the convergence analysis in some recent works such as \cite{cai2016leader} relies on the condition that the state of the leader is bounded.} The idea of constructing {\myr an autonomous system} in a distributed way was proposed in \cite{baldi2019leaderless} for agents in the form of heterogeneous oscillators over undirected graphs. More specifically, in \cite{baldi2019leaderless} the matrix $S_i$ {takes} the following form
\begin{align}\label{transeta00}
S_i=\left[
        \begin{array}{cc}
          0 & 1 \\
          -\beta_i& 0 \\
        \end{array}
      \right]
\end{align}
together with the following distributed dynamics
 $$\dot{\beta}_i=\sum\nolimits_{j\in \mathcal{N}_i}{a_{ij}(\beta_j-\beta_i)},$$
 where $\beta_i\in \mathds{R}$. Lemma \ref{lema2i} extends this result to directed graphs and more general $S_i$. In the following, an internal model design is discussed to handle the unknown disturbances.
\end{rem}

%%====================================================================
\subsection{{Design of a dynamic compensator}}
%%====================================================================

A so-called internal model approach can be adopted to reject the disturbances $d_i(t)$. For compactness, let $\sigma_{is}=\col(\sigma_{is,1},\dots,\sigma_{is,n_{is}})$ and $\sigma_{i}=\col\left(\sigma_{i1},\dots,\sigma_{in}\right)$, $i=1,\dots,N$ and $s=1,\dots,n$. According to \cite{serrani2001semi,chen2009attitude,isidori2003robust,isidori2014robust,chen2014attitude,lu2019adaptive,nikiforov1998adaptive}, we know that for each $i=1,\dots,N$ and $s=1,\dots,n$, there exist positive integers $r_{is}$ and real numbers $c_{is,1},\dots,c_{is,r_{is}}$ which may depend on $\sigma_{is}$, such that
\begin{equation*}
d_{is}^{(r_{is})}= c_{is,1}d_{is} + c_{is,2}\dot{d}_{is} +   \dots + c_{is,r_{is}}d^{(r_{is}-1)}_{is}.
\end{equation*}
Let $T_{is}^{\sigma_{is}}$ be {a nonsingular} matrix of dimension $r_{is}$, and
$$\vartheta_{is}=\col\big(d_{is},\dot{d}_{is},d_{is}^{(2)},\dots,d_{is}^{(r_{is}-1)}\big).$$
Then, we have
\begin{equation*}
\dot{\vartheta}_{is}=\Phi_{is}^{\sigma_{is}}\vartheta_{is},~~d_{is}=\Psi_{is}\vartheta_{is},
\end{equation*}
where
\begin{align*}
\Phi_{is}^{\sigma_{is}}=&\left[
         \begin{array}{c|c}
           0 &  I_{r_{is}-1} \\
           \hline
           c_{is,1} & c_{is,2},\dots,c_{is,r_{is}}% \\
         \end{array}
       \right], \;%\nonumber\\
\Psi_{is}=\row(1,0_{r_{is}-1}).%\left[
%            \begin{array}{cccccc}
%              1 &0&0&\dots&0\\
%              \end{array}
%          \right].&
\end{align*}
Let $M_{is}\in \mathds{R}^{r_{is}\times r_{is}}$ be Hurwitz, $N_{is}\in \mathds{R}^{r_{is}}$, and $(M_{is},N_{is})$ be controllable. Then, there exists a nonsingular matrix $T_{is}^{\sigma_{is}}$ satisfying the Sylvester equation
\begin{equation}\label{Tmatrix}
T_{is}^{\sigma_{is}}\Phi_{is}^{\sigma_{is}}-M_{is}T_{is}^{\sigma_{is}}=N_{is}\Psi_{is}.
\end{equation}
Let $\theta_{is}(t) =-T_{is}^{\sigma_{is}} \vartheta_{is}(t)$, $\theta_{i}=\textnormal{\col}(\theta_{i1},\dots,\theta_{in})$,
  $\Psi_i = \mbox{block diag}(\Psi_{i1},\dots,\Psi_{in})$,
  $M_i = \mbox{block diag}(M_{i1},\dots,M_{in})$, $T_i^{\sigma_{i}}=\mbox{block diag}(T_{i1}^{\sigma_{i1}},\dots,T_{in}^{\sigma_{in}})$, and $N_i =\mbox{block diag}(N_{i1},\dots,N_{in})$.
  % $\Phi_i^{\sigma_{i}}= \mbox{block diag}(\Phi_{i1}^{\sigma_{i1}},\dots,\Phi_{in}^{\sigma_{in}})$.
Then we have $$d_i=-\Psi_{i}\left(T_{i}^{\sigma_{i}}\right)^{-1}\theta_{i}.$$
The dynamic compensator is designed as
\begin{equation}\label{dstureq4}
\dot{\xi}_i=M_i\xi_i+N_i\tau_i,
\end{equation}
where $\xi_i\in \mathds{R}^{n_i}$ with $n_i=\sum_{s=1}^{n} r_{is}$. The next section concerns the design of the distributed control $\tau_i$.
\section{Main Results} \label{section4}
To propose a distributed control law for the EL agents, we assume that $\dot{\eta}_0=\mathds{1}_N\otimes (S^*e^{S^*t}\chi^{*})$ in \eqref{consesusprool} is bounded for all $t\geq0$, which implies that $\dot{\eta}_i$ is bounded for all $t\geq0$, for $i=1,\dots,N$. Let
\begin{subequations}\label{claw}
\begin{align}
\dot{q}_{ri}&=S_i\eta_i-\alpha\left(q_i-\eta_i\right),\label{claw1}\\
 s_i&=\dot{q}_i-\dot{q}_{ri},\label{claw2}
 \end{align}
\end{subequations}
where $\alpha>0$ and $\eta_i$ and $S_i$ {are} generated by \eqref{sweq1}. Then,
\begin{subequations}\label{dclaw}
\begin{align}
\ddot{q}_{ri}&=\dot{S}_i{\eta}_i+S_i\dot{\eta}_i-\alpha\left(\dot{q}_i-\dot{\eta}_i\right),\label{dclaw1}\\
 \dot{s}_i&=\ddot{q}_i-\ddot{q}_{ri}.\label{dclaw2}
 \end{align}
\end{subequations}
By Property \ref{prop1}, there exists a known matrix $Y_i\left(q_i,\dot{q}_i, {\myr \dot{q}_{ri}, \ddot{q}_{ri}}\right)$ and an unknown constant vector $\Theta_i$ such that
\begin{align}\label{Ybounded}
 Y_i\left(q_i,\dot{q}_i, {\myr \dot{q}_{ri}, \ddot{q}_{ri}}\right) \Theta_i=&\Mathcal{M}_i\left(q_i\right)\ddot{q}_{ri}+G_i\left(q_i\right)\nonumber\\
 &+\Mathcal{C}_i\left(q_i,\dot{q}_i\right)\dot{q}_{ri}.
\end{align}
Next, substituting $Y_i\left(q_i,\dot{q}_i, {\myr \dot{q}_{ri}, \ddot{q}_{ri}}\right)\Theta_i$ into system \eqref{MARINEVESSEL1} gives
\begin{align}\label{undistur2}
  \Mathcal{M}_i\left(q_i\right)\left(\ddot{q}_i-\ddot{q}_{ri}\right)+\Mathcal{C}_i\left(q_i,\dot{q}_i\right)\left(\dot{q}_i-\dot{q}_{ri}\right) &\nonumber\\
+Y_i\left(q_i,\dot{q}_i,{\myr \dot{q}_{ri}, \ddot{q}_{ri}}\right)\Theta_i&=\tau_i+d_i.
\end{align}
Then, from \eqref{claw2} and \eqref{undistur2}, we have
\begin{align}%\label{undistur3}
\Mathcal{M}_i\left(q_i\right)\dot{s}_i=&\tau_i-\Mathcal{C}_i\left(q_i,\dot{q}_i\right)s_i\nonumber\\
&-Y_i\left(q_i,\dot{q}_i,{\myr \dot{q}_{ri}, \ddot{q}_{ri}}\right)\Theta_i+d_i.\label{undistur1i}
\end{align}
Consider the augmented system composed of (\ref{dstureq4}) and (\ref{undistur1i}), and the following coordinate transformation
\begin{subequations}
\begin{align}
\bar{\xi}_i &=\xi_i-\theta_i \label{20201216-contr-xi}\\
\tilde{\tau}_i&=\tau_i-A_i\xi_i \label{20201216-contr-tau}\\
d_i&=-B_i\theta_{i}
\end{align}
\end{subequations}
where $A_i=\Psi_i\left(T_{i}^{0}\right)^{-1}$ and $B_i=\Psi_i\left(T_{i}^{\sigma_i}\right)^{-1}$ with $T_i^{0}$ being a nonsingular matrix, $\Psi_i$ and $T_{i}^{\sigma_i}$ given in \eqref{Tmatrix}. We have
\begin{align*}
\dot{\bar{\xi}}_i =&\left[M_i+N_iA_i\right]\bar{\xi}_i+N_i\check{u}+N_iE^{\sigma_i}_{i}\theta_{i},\\
\Mathcal{M}_i\left(q_i\right)\dot{s}_i=&\tilde{\tau}_i-\Mathcal{C}_i\left(q_i,\dot{q}_i\right)s_i+A_i\bar{\xi}_i\nonumber\\
 &-Y_i\left(q_i,\dot{q}_i,{\myr \dot{q}_{ri}, \ddot{q}_{ri}}\right)\Theta_i+E^{\sigma_i}_{i}\theta_{i},
 \end{align*}
with $E^{\sigma_i}_{i}= A_i-B_i$. Then, a further transformation $$\tilde{\xi}_i=\bar{\xi}_i-N_i\mathcal{M}_i\left(q_i\right)s_i,$$ gives
 \begin{align*}
\dot{\tilde{\xi}}_i%&=\dot{\bar{\xi}}_i-N_i\dot{M}_i\left(q_i\right)s_i-N_iM_i\left(q_i\right)\dot{s}_i\\
=&M_i\tilde{\xi}_i+P_i(q_i,\dot{q}_i,s_i)\Theta_i,\\
%\Mathcal{M}_i\left(q_i\right)\dot{s}_i&=-\Mathcal{C}_i\left(q_i,\dot{q}_i\right)s_i+\check{u}+A_i\left(\tilde{\xi}_i+N_iM_i\left(q_i\right)s_i\right)-Y_i\Theta_i+\Psi_iE^{\sigma_i}_{i}\theta_{i}\\
\Mathcal{M}_i\left(q_i\right)\dot{s}_i%&=-\Mathcal{C}_i\left(q_i,\dot{q}_i\right)s_i+\check{u}+\Psi_i\left(T_{i}^{0}\right)^{-1}\bar{\xi}_i-Y_i\Theta_i+\Psi_iE^{\sigma_i}_{i}\theta_{i}\\
=&\tilde{\tau}_i-\Mathcal{C}_i\left(q_i,\dot{q}_i\right)s_i+A_i\tilde{\xi}_i+Q_i(q_i,\dot{q}_i,s_i)\Theta_i\nonumber\\
&+E^{\sigma_i}_{i}\xi_i-E^{\sigma_i}_{i}\left[\tilde{\xi}_i+N_i\mathcal{M}_i\left(q_i\right)s_i\right],
 \end{align*}
where $\xi_i\in \mathds{R}^{n_i}$, $s_i\in \mathds{R}^{n}$, and
\begin{align}
  P_i(q_i,\dot{q}_i,s_i)\Theta_i =& M_iN_i\mathcal{M}_i\left(q_i\right)s_i+N_i\mathcal{C}_i\left(q_i,\dot{q}_i\right)s_i\nonumber\\
  &-N_i\dot{\mathcal{M}}_i\left(q_i\right)s_i +N_iY_i\left(q_i,\dot{q}_i,{\myr \dot{q}_{ri}, \ddot{q}_{ri}}\right)\Theta_i,\nonumber\\
  Q_i(q_i,\dot{q}_i,s_i)\Theta_i=& A_iN_i\mathcal{M}_i\left(q_i\right)s_i \nonumber\\
  &-Y_i\left(q_i,\dot{q}_i,{\myr \dot{q}_{ri}, \ddot{q}_{ri}}\right)\Theta_i,\label{PQbounded}
\end{align}
{with} $P_i(q_i,\dot{q}_i,s_i)$ and $Q_i(q_i,\dot{q}_i,s_i)$ {being} known regression matrices.
Let $\zeta_i\in R^{n_i\times p}$ be produced by an auxiliary system
\begin{equation}\label{dstureq8}
  \dot{\zeta}_i=M_i\zeta_i+P_i(q_i,\dot{q}_i,s_i).
\end{equation}
Let $\hat{\xi}_i=\tilde{\xi}_i-\zeta_i\Theta_i$. A straightforward computation shows
\begin{subequations}\label{dstureq9}
 \begin{align}
   \dot{\hat{\xi}}_i=&M_i\tilde{\xi}_i+P_i(q_i,\dot{q}_i,s_i)\Theta_i\nonumber\\
                     &-\left[M_i\zeta_i+P_i(q_i,\dot{q}_i,s_i)\right]\Theta_i\nonumber\\
                    =&M_i\hat{\xi}_i, \label{dstureq9i}\\
 \Mathcal{M}_i\left(q_i\right)\dot{s}_i%&=-\Mathcal{C}_i\left(q_i,\dot{q}_i\right)s_i+\check{u}+\Psi_i\left(T_{i}^{0}\right)^{-1}\bar{\xi}_i-Y_i\Theta_i+\Psi_iE^{\sigma_i}_{i}\theta_{i}\\
=&\tilde{\tau}_i-\Mathcal{C}_i\left(q_i,\dot{q}_i\right)s_i+B_i\hat{\xi}_i\nonumber\\
&+\left[A_i\zeta_i+Q_i(q_i,\dot{q}_i,s_i)\right]\Theta_i\nonumber\\
&+E^{\sigma_i}_{i}\left[\xi_i-N_i\mathcal{M}_i\left(q_i\right)s_i\right]-E^{\sigma_i}_{i}\zeta_i\Theta_i. \label{dstureq9ii}
 \end{align}
\end{subequations}
%The uncertainty term $P_i(q_i,\dot{q}_i,s_i)\Theta_i$ disappears from \eqref{dstureq9i} while the newly introduced variable $\zeta_i$ is produced by a known function $P_i(q_i,\dot{q}_i,s_i)$ and matrix $M_i$.
Since {\myr $M_i$ is Hurwitz in \eqref{dstureq9i}}, we only need to concentrate on the second equation of (\ref{dstureq9}). To handle the uncertain term in \eqref{dstureq9ii} (i.e. the last two lines of \eqref{dstureq9ii}) with adaptive control technique, we note that the uncertainty in the matrix $E^{\sigma_i}_{i}$ can be linearly parameterized for some integer $l\geq 1$ as follows
\begin{align*}
% \nonumber to remove numbering (before each equation)
  E^{\sigma_i}_{i} =& \sum\nolimits_{j=1}^{l}E_{ij}\varrho_{ij}\\
  =&E_i\left[\varrho_{i}\otimes I_{n_i}\right],%  \\
\end{align*}
where $E_i=\row\left(E_{i1},\dots,E_{il}\right)$, $\varrho_{i}=\col\left(\varrho_{i1},\dots,\varrho_{il}\right)$, $E_{ij}\in \mathds{R}^{n\times n_i}$ is a constant matrix and $\varrho_{ij}\in \mathds{R}$ is a smooth function of $\sigma_i$. As a result
\begin{eqnarray*}
E^{\sigma_i}_{i}\zeta_i\Theta_i=\left[E_i\circ \zeta_i\right]\left[\varrho_{i}\otimes\Theta_i \right],
\end{eqnarray*}
{where } $E_i\circ \zeta_i=\row\left(E_{i1}\zeta_i,\dots, E_{il}\zeta_i\right)$. Besides,
\begin{align*}
E^{\sigma_i}_{i}\xi_i=&\left[E_i\circ\xi_i\right]\varrho_{i},\\
E^{\sigma_i}_{i}N_iM_i\left(q_i\right)s_i=&E_i\left[\varrho_{i}\otimes I_{n_i}\right] N_iL_i\left(q_i,s_i\right)\Theta_i \nonumber\\
=&E_i\circ \left[N_iL_i\left(q_i,s_i\right)\right]\left[\varrho_{i}\otimes\Theta_i \right],
\end{align*}
where $ L_i\left(q_i,s_i\right)\Theta_i=\Mathcal{M}_i\left(q_i\right)s_i$, {and} $L_i\left(q_i,s_i\right)$ is a known regression matrix. Now, the system (\ref{dstureq9}) can be further written in the following linearly parameterized form
\begin{subequations}\label{dstureq10}
 \begin{align}
% \nonumber to remove numbering (before each equation)
   \dot{\hat{\xi}}_i=&M_i\hat{\xi}_i, \label{dstureq10i}\\
 \Mathcal{M}_i\left(q_i\right)\dot{s}_i%&=-\Mathcal{C}_i\left(q_i,\dot{q}_i\right)s_i+\check{u}+\Psi_i\left(T_{i}^{0}\right)^{-1}\bar{\xi}_i-Y_i\Theta_i+\Psi_iE^{\sigma_i}_{i}\theta_{i}\\
=&\tilde{\tau}_i-\Mathcal{C}_i\left(q_i,\dot{q}_i\right)s_i\nonumber\\
&+B_i\hat{\xi}_i+\rho_i(q_i,\dot{q}_i,s_i,\zeta_i)\omega_i,\label{dstureq10ii}
 \end{align}
\end{subequations}
where $\omega_i  =\col\left(\Theta_i,\varrho_{i}\otimes\Theta_i,\varrho_{i}\right)$  is a constant vector consisting of the uncertain parameters of (\ref{MARINEVESSEL1}) and \eqref{exdistur}, and $\rho_i(q_i,\dot{q}_i,s_i,\zeta_i)$ is a known regression matrix with
\begin{align}
      \rho_i(q_i,\dot{q}_i,s_i,\zeta_i)=&\col( \rho_{i1}(q_i,\dot{q}_i,s_i,{\myr \zeta_i}),\rho_{i2}(q_i,s_i,\zeta_i), \rho_{i3}(\xi_i)  )\nonumber\\
      =&\left[
                                           \begin{array}{c}
                                             A_i\zeta_i+Q_i(q_i,\dot{q}_i,s_i) \\
                                            E_i\circ \left[\zeta_i+N_iL_i\left(q_i,s_i\right)\right]\\
                                            E_i\circ\xi_i\\
                                           \end{array}
                                         \right].\label{rhobounded}
      \end{align}
The last step for solving the regulation problem of system (\ref{dstureq10}) is to introduce the
control law as follows,
\begin{subequations}\label{dstureq11}
 \begin{align}
  \tilde{\tau}_i&=-K_{i}s_i-\rho_i(q_i,\dot{q}_i,s_i,\zeta_i)\hat{\myr \omega}_i,\\
   \dot{\hat{\omega}}_i&=\Lambda_{i}^{-1}\rho_{i}^T(q_i,\dot{q}_i,s_i,\zeta_i)s_i, \label{dstureq11ii}
 \end{align}
\end{subequations}
where $s_i$ is calculated from \eqref{claw2}, $\zeta_i$ is generated by \eqref{dstureq8}, the vector $\hat{\omega}_i$ is used to estimate $\omega_i$, $K_i$ is a positive definite matrix, and $\Lambda_i$ a positive definite diagonal matrix representing the estimator update rate. Now we are in a position to present our main result.
\begin{thm}
Consider system (\ref{MARINEVESSEL1}) over a communication graph satisfying Assumption \ref{ass2}. Problem \ref{ldlesp} is solvable by the control law consisting of \eqref{sweq1}, \eqref{dstureq4}, \eqref{dstureq8}, and \eqref{dstureq11}. %\eqref{20201216-contr-tau},
\end{thm}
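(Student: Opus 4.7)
The plan is to exploit the separation already established by Lemmas \ref{lema2}--\ref{lema2i}: every observer tracks the common signal $e^{S^{*}t}\chi^{*}$ exponentially, with the inter-agent mismatch $\eta_j-\eta_i$ decaying at rate $\mu_2\lambda_1-\|S^{*}\|$. What remains is to prove that the adaptive, internal-model-based controller \eqref{dstureq11} forces each $q_i$ to track its own observer output $\eta_i$, in spite of the unknown inertia/Coriolis parameters $\Theta_i$, the unknown disturbance parameters aggregated in $\omega_i$, and the exponentially decaying observer transients. Convergence of $q_i$ and $\dot q_i$ to the common trajectory will then follow by the triangle inequality.

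For the core argument, I would pick $P_i=P_i^{\top}>0$ satisfying $P_iM_i+M_i^{\top}P_i=-I_{n_i}$ (possible because $M_i$ is Hurwitz by construction), set $\tilde\omega_i=\hat\omega_i-\omega_i$, and use the Lyapunov candidate
$$V_i=\tfrac{1}{2}s_i^{\top}\mathcal{M}_i(q_i)s_i+\tfrac{1}{2}\tilde\omega_i^{\top}\Lambda_i\tilde\omega_i+\gamma\,\hat\xi_i^{\top}P_i\hat\xi_i,$$
with $\gamma>0$ to be fixed. Differentiating along \eqref{dstureq10}, Property~(3) turns $\tfrac12 s_i^{\top}\dot{\mathcal M}_is_i$ into $s_i^{\top}\mathcal{C}_is_i$, cancelling the Coriolis contribution; substitution of \eqref{dstureq11} together with the adaptive update \eqref{dstureq11ii} annihilates the $\tilde\omega_i$-bilinear term (the standard passivity cancellation), while $\dot{\hat\xi}_i=M_i\hat\xi_i$ contributes $-\gamma\|\hat\xi_i\|^2$. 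What remains is $\dot V_i\le -s_i^{\top}K_is_i+s_i^{\top}B_i\hat\xi_i-\gamma\|\hat\xi_i\|^2$, and a Young's-inequality split of the cross term, taking $\gamma$ large enough, yields $\dot V_i\le -c_1\|s_i\|^2-c_2\|\hat\xi_i\|^2$. This already gives boundedness of $s_i,\tilde\omega_i,\hat\xi_i$ together with $s_i,\hat\xi_i\in L_2$.

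The hard part will be closing the boundedness loop and preparing the ground for Barbalat's lemma, because the common reference $e^{S^{*}t}\chi^{*}$ is only assumed to have bounded derivative (not bounded state). To circumvent this, introduce $e_i=q_i-\eta_i$; combining \eqref{claw} with \eqref{sweq1ii} gives $\dot e_i=-\alpha e_i+s_i-\mu_2\sum_{j\in\mathcal{N}_i}a_{ij}(\eta_j-\eta_i)$, a stable first-order ODE driven by bounded inputs, so $e_i$ stays bounded. Property~(1) then bounds $\mathcal{M}_i,\mathcal{C}_i,G_i$ in terms of $\|\dot q_i\|$, which is controlled by boundedness of $s_i$ and $\dot\eta_i$; a further check shows $\dot{S}_i\eta_i$ stays bounded because $\|\dot S_i\|$ decays exponentially while $\|\eta_i\|$ grows at most polynomially, so $\ddot q_{ri}$ and in turn the regressors in \eqref{Ybounded}, \eqref{PQbounded}, \eqref{rhobounded} remain bounded and uniformly continuous. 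Hence $\dot s_i$ and $\dot{\hat\xi}_i$ are bounded, Barbalat's lemma gives $s_i\to 0$ and $\hat\xi_i\to 0$, and the same error equation, now driven by vanishing inputs, gives $e_i\to 0$. Consequently $q_i(t)-e^{S^{*}t}\chi^{*}\to 0$ and $\dot q_i(t)=S_i\eta_i-\alpha e_i+s_i\to S^{*}e^{S^{*}t}\chi^{*}$ for every $i$; since both limits are identical across agents, $q_i-q_j\to 0$ and $\dot q_i-\dot q_j\to 0$, which is precisely what Problem~\ref{ldlesp} demands.
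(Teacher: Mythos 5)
Your proposal is correct and follows essentially the same route as the paper: the identical Lyapunov function $\tfrac12 s_i^{\top}\mathcal{M}_i s_i+\tfrac12\tilde\omega_i^{\top}\Lambda_i\tilde\omega_i+\gamma\hat\xi_i^{\top}P_i\hat\xi_i$ with skew-symmetry and passivity cancellation and a Young's-inequality split of $s_i^{\top}B_i\hat\xi_i$, the same stable first-order error equation $\dot e_i+\alpha e_i=s_i-\mu_2\sum_{j\in\mathcal{N}_i}a_{ij}(\eta_j-\eta_i)$ to close the boundedness loop, and the same Barbalat argument to get $s_i\to0$ and hence $e_i\to0$. The only cosmetic difference is that you invoke $L_2$ membership and the exponential-decay-versus-polynomial-growth argument for $\dot S_i\eta_i$ a bit more explicitly, which the paper states as $\lim_{t\to\infty}\dot S_i(t)\eta_i(t)=0$.
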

\begin{proof}Substituting (\ref{dstureq11}) into (\ref{dstureq10}) gives
% Then by $s_i=\dot{q}_i-\dot{\eta}_i$ and equation (\ref{dstureq11ii}), we have
 \begin{subequations}\label{MARINEVESSEL14}
\begin{align}
\dot{\hat{\xi}}_i=&M_i\hat{\xi}_i, \\
  \Mathcal{M}_i\left(q_i\right)\dot{s}_i=&-\Mathcal{C}_i\left(q_i,\dot{q}_i\right)s_i-K_is_i\nonumber\\
  &-\rho_i(q_i,\dot{q}_i,s_i,\zeta_i)\tilde{\omega}_i+B_i\hat{\xi}_i, \label{MARINEVESSEL14i}\\
  \dot{\tilde{\omega}}_i=&\Lambda_{i}^{-1}\rho_{i}^T(q_i,\dot{q}_i,s_i,\zeta_i)s_i,\label{MARINEVESSEL14ii}
\end{align}
\end{subequations}
where $\tilde{\omega}_i=\hat{\omega}_i-\omega_i$. Let $\mathcal{Q}_i$ be the symmetric positive definite matrix satisfying $$\mathcal{Q}_iM_i+M_i^{T}\mathcal{Q}_i=-I$$ and pick a real number $\epsilon\geq\frac{\|B_i\|^2}{\lambda_{min}(K_i)}$, where $\|B_i\|=\max\limits_{\|x\|=1}\|B_ix\|$. Pick the following Lyapunov function candidate
\begin{equation*}
V_i=\epsilon\hat{\xi}_{i}^T\mathcal{Q}\hat{\xi}_{i}+\frac{1}{2}\left[s_{i}^T\mathcal{M}_i\left(q_i\right)s_i+\tilde{\omega}_{i}^T\Lambda_{i}\tilde{\omega}_{i}\right].
\end{equation*}
The time derivative of $V$ along the trajectory \eqref{MARINEVESSEL14} is
\begin{align*}
  \dot{V}_i%=&s_{i}^T{\mathcal{M}_i}\left(q_i\right)\dot{s}_i+\frac{1}{2}s_{i}^T\dot{\mathcal{M}}_i\left(q_i\right)s_i+\tilde{\omega}_{i}^T\Lambda_{i}\dot{\tilde{\omega}}_{i}+2\epsilon\dot{\hat{\xi}}_{i}^T\mathcal{Q}_i\hat{\xi}_{i}\nonumber\\
  %=&s_{i}^T\left[B_i\hat{\xi}_i-\Mathcal{C}_i\left(q_i,\dot{q}_i\right)s_i-K_is_i-\rho_{i}(q_i,\dot{q}_i,s_i,\zeta_i)\tilde{\omega}_i\right]\nonumber\\
  %&+\frac{1}{2}s_{i}^T\dot{\mathcal{M}}_i\left(q_i\right)s_i+\tilde{\omega}_{i}^T\rho_{i}^T(q_i,\dot{q}_i,s_i,\zeta_i)s_i-\epsilon\|\hat{\xi}_{i}\|^2\nonumber\\
  =&-s_{i}^TK_is_i+\frac{1}{2}s_{i}^T\big[\dot{\mathcal{M}}_i\left(q_i\right)-2\Mathcal{C}_i\left(q_i,\dot{q}_i\right)\big]s_i+s_{i}^TB_i\hat{\xi}_i\nonumber\\
  &-s_{i}^T\rho_{i}(q_i,\dot{q}_i,s_i,\zeta_i)\tilde{\omega}_i+\tilde{\omega}_{i}^T\rho_{i}^T(q_i,\dot{q}_i,s_i,\zeta_i)s_i-\epsilon\|\hat{\xi}_{i}\|^2.\nonumber
  \end{align*}
  Since $\dot{{\mathcal{M}}}_i\left(q_i\right)-2\Mathcal{C}_i\left(q_i,\dot{q}_i\right)$ is skew symmetric, we have
\begin{align}\label{MARINEVESSEL16}
  \dot{V}_i=&-s_{i}^TK_is_i+s_{i}^TB_i\hat{\xi}_i-\epsilon\|\hat{\xi}_{i}\|^2\nonumber\\
  \leq&-s_{i}^TK_is_i+\frac{1}{2\epsilon}\|s_{i}^TB_i\|^2+\frac{\epsilon}{2}\|\hat{\xi}_i\|^2-\epsilon\|\hat{\xi}_{i}\|^2\nonumber\\
  \leq&-\frac{\epsilon}{2}\|\hat{\xi}_i\|^2-\frac{1}{2}s_{i}^TK_is_i\nonumber\\
  =&-a\big(\hat{\xi}_i,s_{i}\big).
  \end{align}
Thus, $s_{i}$, $\hat{\xi}_{i}$ and $\tilde{\omega}_i$ are bounded. From(\ref{sweq1}) and (\ref{claw}), we have
   \begin{align*}
   \dot{q}_{i}-&\dot{\eta}_i+\alpha\left(q_{i}-\eta_i\right)
  =s_i- \mu_2\sum\nolimits_{j\in \mathcal{N}_i} {a_{ij}(\eta_j-\eta_i)},
  \end{align*}
which can be further rewritten as
   \begin{equation}\label{errorsystem}
  \dot{e}_i+\alpha e_i=s_i- \mu_2\sum\nolimits_{j\in \mathcal{N}_i} {a_{ij}(\eta_j-\eta_i)}.
  \end{equation}
This can be viewed as a stable first-order differential equation in $e_i$ with $s_i-\mu_2\sum\nolimits_{j\in \mathcal{N}_i} {a_{ij}(\eta_j-\eta_i)}$ as the input. Since this input is bounded for all $t\geq 0$, we conclude that both $e_i=q_i-\eta_i$ and $\dot{e}_i=\dot{q}_i-\dot{\eta}_i$ are bounded for all $t\geq 0$, which further implies $\dot{q}_i$ is bounded for all $t\geq 0$ because of $\dot{\eta}_i$ is bounded for all $t\geq 0$.

By Property \ref{propbound}, we obtain that $\Mathcal{M}_i(q_i)$, $\Mathcal{C}_i\left(q_i,\dot{q}_i\right)$ and $G_i(q_i)$ are all bounded for all $t\geq 0$.
It is noted that $$\lim_{t\rightarrow\infty}\dot{S}_i(t)\eta_i(t)=0\;\textnormal{and}\; \lim_{t\rightarrow\infty}\big[S_i(t)\dot{\eta}_i(t)- (S^*)^2e^{S^*t}\chi^{*}\big]=0$$ from Lemma~\ref{lema2} and Lemma~\ref{lema2i},
where $\chi^{*}$ and $S^*$ are defined in \eqref{transeta0} and Remark \ref{rem1chi}, respectively. Hence, {\myr $\dot{q}_{ri}$ and $\ddot{q}_{ri}$ are bounded from \eqref{claw} for all $t\geq0$.}
By equation \eqref{MARINEVESSEL14i}, we have $Y_i\left(q_i,\dot{q}_i,{\myr \dot{q}_{ri}, \ddot{q}_{ri}}\right)$ is bounded. Noted that, $P_i(q_i,\dot{q}_i,s_i)$ and $Q_i(q_i,\dot{q}_i,s_i)$ are bounded for all $t\geq0$ from \eqref{PQbounded}. Thus, $\zeta_i$ is also bounded for all $t\geq0$ from a stable differential equation \eqref{dstureq8} with a bounded input $P_i(q_i,\dot{q}_i,s_i)$. As a result, $\xi_i$ is bounded for all $t\geq0$ from \eqref{20201216-contr-xi} and the fact that $\theta_i$ is bounded for all $t\geq0$.
Then, $\rho(q_i,\dot{q}_i,s_i,\zeta_i)$ is bounded for all $t\geq0$ from \eqref{rhobounded}. Hence, we have $\dot{s}(t)$ is bounded for all $t\geq0$ from \eqref{MARINEVESSEL14i}.

By integrating both sides of (\ref{MARINEVESSEL16}), we can show that
$$\int_{0}^{t}a\big(\hat{\xi}_i(\tau),s_{i}(\tau)\big)d\tau \leq V(0)- V(t) \leq V(0). $$
  Thus $\lim\limits_{t\rightarrow\infty}\int_{0}^{t}a\big(\hat{\xi}_i(\tau),s_{i}(\tau)\big) d\tau$ exists and is finite. Therefore, $$\dot{a}\big(\hat{\xi}_i(t),s_{i}(t)\big)=\frac{\partial a}{\partial \hat{\xi}_i}\dot{\hat{\xi}}_i + \frac{\partial a}{\partial s_i}\dot{s}_i$$ is bounded for all $t\geq 0$, and hence ${a}\big(\hat{\xi}_i(t),s_{i}(t)\big)$ is uniformly continuous in $t$.  Applying Barbalat's lemma, we have $\lim_{t\rightarrow \infty}{a}\big(\hat{\xi}_i(t),s_{i}(t)\big) = 0$, thus $\lim_{t\rightarrow \infty}s_i(t)= 0$.

Since the input in \eqref{errorsystem} is bounded for all $t\geq 0$ and tends to zero as $t\rightarrow \infty$, we conclude that both $e_i=q_i-\eta_i$ and $\dot{e}_i=\dot{q}_i-\dot{\eta}_i$ are bounded for all $t\geq 0$ and will decay to zero. Together with \eqref{conzero}, the proof is {completed}.
\end{proof}

\begin{rem}
For the single Euler-Lagrange system as in \cite{lu2019adaptive}, the tracking signal is bounded. In our multiple Euler-Lagrange setting we only require that the derivative of the final consensus state is bounded without
imposing bounds on the cooperatively agreed trajectory. % of an tracking unbounded signal can be attained: we only require that the derivative of the final consensus state is bounded.}
\end{rem}

\section{Numerical Example}\label{section5}
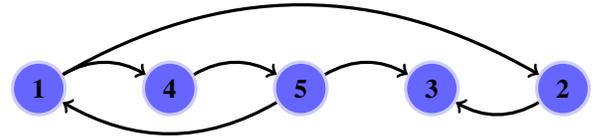
\begin{figure}[ht]
\begin{center}
\begin{tikzpicture}[transform shape]
    \centering%

    \node (3) [circle,draw=blue!20, fill=blue!60, very thick, minimum size=7mm] {\textbf{3}};
   % \node (6) [circle,right=of 4, draw=blue!80, fill=blue!20, very thick, minimum size=7mm] {\textbf{6}};
     \node (5) [circle,  left =of 3, draw=blue!20, fill=blue!60, very thick, minimum size=7mm] {\textbf{5}};
     \node (4) [circle, left=of 5, draw=blue!20, fill=blue!60, very thick, minimum size=7mm] {\textbf{4}};
   \node (1) [circle, left=of 4, draw=blue!20, fill=blue!60, very thick, minimum size=7mm] {\textbf{1}};
   \node (2) [circle, right=of 3, draw=blue!20, fill=blue!60, very thick, minimum size=7mm] {\textbf{2}};
   % \draw[ very  thick,->,  right] (0) edge (1);
    \draw[ very  thick,->,bend  left] (1) edge (2);
    \draw[ very  thick,->, bend left] (1) edge (4);
    \draw[ very  thick,->, bend left] (5) edge (3);
    \draw[ very  thick,->, bend  left] (2) edge (3);
    \draw[ very  thick,->, bend left] (4) edge (5);
    \draw[ very  thick,->,bend left] (5) edge (1);
   % \draw[ very  thick,-,  right] (4) edge (6);
    %\draw[ very  thick,->,  bend  right] (school3) edge (school2);
    % \draw[ very  thick,<-, bend left] (home) edge (school3);
\end{tikzpicture}
\end{center}
\caption{ Communication {graph} $\bar{\mathcal{G}}$}\label{fig0}
\end{figure}

\begin{figure}
  \centering
   \includegraphics[width=0.93\linewidth]{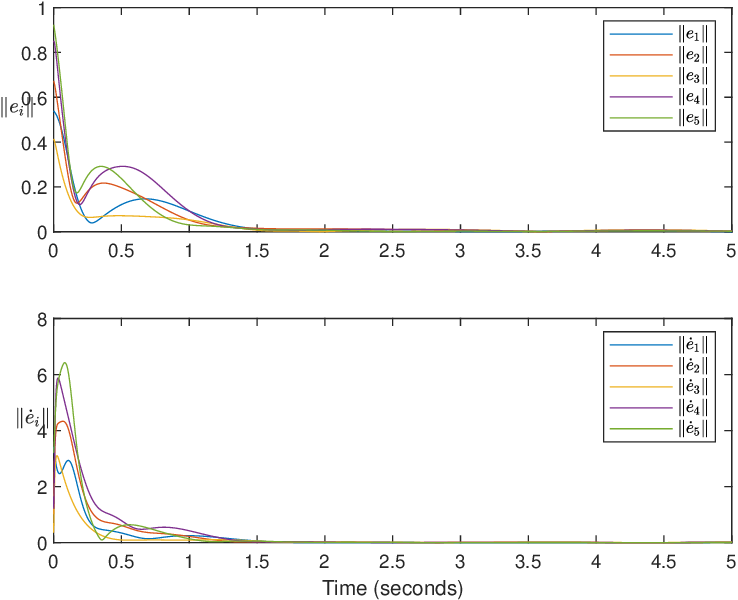}\\
   \caption{{\hwz Trajectories} of $\|e_i\|$ and $\|\dot{e}_i\|$, for $i=1,\dots,5$.}\label{fig2}
\end{figure}
%\begin{figure}[ht]
%  \centering
%  % Requires \usepackage{graphicx}
%  \epsfig{figure=q.eps,height=2.8in}
%  % \caption{Tracking Error}\label{fig2i}
%
%  \caption{Consensus performance of $q_i$ and $\dot{q}_i$, for $i=1,\cdots,5$.}\label{fig3}
%\end{figure}

\begin{figure}
  \centering
   \includegraphics[width=0.93\linewidth]{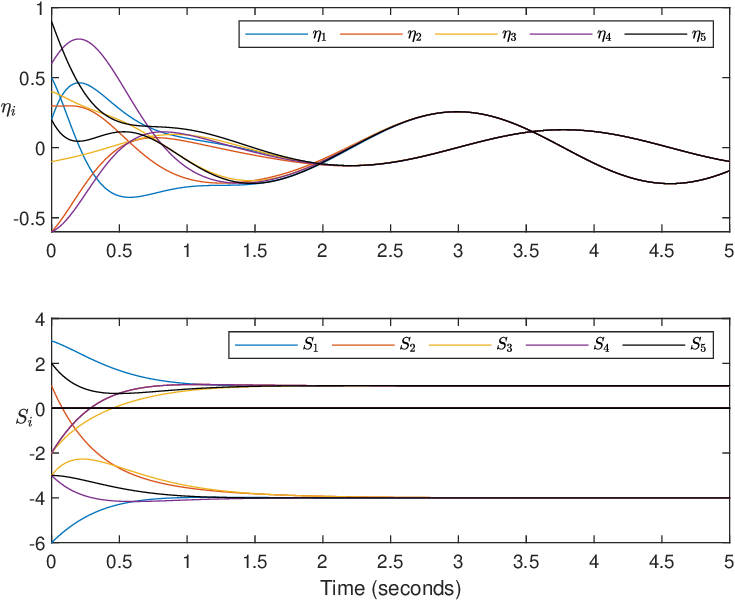}\\
  \caption{{\hwz Trajectories} of $\eta_i$ and $S_i$, for $i=1,\cdots,5$.}\label{fig3}
\end{figure}

%\begin{figure}
%  \centering
%   \includegraphics[width=8cm]{Figs/compemsator.eps}\\
%  %\epsfig{figure=Figs/compemsator.eps,width=\linewidth}
%  % \caption{Tracking Error}\label{fig2i}
%  \caption{Trajectory of $\hat{\omega}_i$, $\xi_i$ and $\zeta_i$, for $i=1,\cdots,5$.}\label{fig4}
%\end{figure}
%In this numerical example,
Consider a group of 5 EL agents with the communication network described in Figure \ref{fig0}. Let each EL agent represent a two-link robotic arm, whose dynamics is described by \eqref{MARINEVESSEL1}, {with} generalized coordinates $q_i=\col\left(\theta_{i1},\theta_{i2}\right)$,
\begin{subequations}
\begin{align}
% \nonumber to remove numbering (before each equation)
  \Mathcal{M}_i\left(q_i\right) &= \left[
                            \begin{array}{cc}
                              a_{i1} +a_{i2}+2a_{i3}\cos\theta_{i2} &  a_{i2}+a_{i3}\cos\theta_{i2} \\
                               a_{i2}+ a_{i3}\cos\theta_{i2}&  a_{i2} \\
                            \end{array}
                          \right],\nonumber \\
  \Mathcal{C}_i\left(q_i,\dot{q}_i\right) &= \left[
                                      \begin{array}{cc}
                                        -a_{i3}\left(\sin\theta_{i2}\right)\dot{\theta}_{i2}& -a_{i3}\sin\theta_{i2}\big(\dot{\theta}_{i1}+\dot{\theta}_{i2}\big) \\
                                         a_{i3}\sin\theta_{i2}\dot{\theta}_{i1} & 0  \\
                                      \end{array}
                                    \right],
  \nonumber\\
 G_i\left(q_i\right) &=\left[
                          \begin{array}{c}
                            a_{i4}g\cos\theta_{i1}+a_{i5}g\cos\left(\theta_{i1}+\theta_{i2}\right) \\
                            a_{i5}g\cos\left(\theta_{i1}+\theta_{i2}\right) \\
                          \end{array}
                        \right],
 \nonumber
\end{align}
\end{subequations}
and $\Theta_i=\col\left(a_{i1},a_{i2},a_{i3},a_{i4},a_{i5}\right)$. This dynamics is adopted from Example 3.2-2 in \cite{lewis2004Book-robot} with some simplified modification of notations. The physical interpretation of each parameter can be found in \cite{lewis2004Book-robot}.
We consider the disturbance $$d_{ik}=\psi_{ik}\sin\left(\sigma_{ik}t+\phi_{ik}\right),~~~k=1,2.$$
%Suppose $\sigma_{i1}=b_1$ and $\sigma_{i2}=b_2$.
According to the internal model approach, we can select % It is easy to see that
$$\Phi_{ik}=\left[
\begin{array}{cc}
 0 & 1 \\
  -\sigma_{ik}^2 & 0 \\
  \end{array}
  \right],~~\Psi_{ik}=\left[
    \begin{array}{cc}
    1 &
     0 \\
     \end{array}
     \right].$$ Choosing $$M_{ik}=\left[
   \begin{array}{cc}
    0 & 1 \\
    -3 & -2 \\
    \end{array}
    \right],~~N_{ik}=\left[
    \begin{array}{c}
    0 \\
     1 \\
     \end{array}
     \right],$$ gives
     \begin{align*}
     % \nonumber to remove numbering (before each equation)
      T_{ik}^{\sigma_{ik}} =& \left[
       \begin{array}{cc}
       3-\sigma_{ik}^2 & -2 \\
       2\sigma_{ik}^2 & 3-{\sigma_{ik}^2} \\
       \end{array}
       \right]\frac{1}{\left(3-\sigma_{ik}^2\right)^2+4\sigma_{ik}^2},\\
       \Psi_{ik}\left(T_{ik}^{\sigma_{ik}}\right)^{-1}=&\left[
       \begin{array}{cc}
       3-\sigma_{ik}^2 & 2 \\
       \end{array}
       \right],\\
T_{ik}^{0} =& \left[
       \begin{array}{cc}
       3 & -2 \\
      0 & 3 \\
       \end{array}
       \right]\frac{1}{9}.
\end{align*}
Let $\sigma_i=\col\left(\sigma_{i1}, \sigma_{i2}\right)$, $\psi_i=\col\left(\psi_{i1}, \psi_{i2}\right)$, $\phi_i=\col\left(\phi_{i1}, \phi_{i2}\right)$, $M_i=\mbox{block diag}\left(M_{i1},M_{i2}\right)$, $N_i=\mbox{block diag}\left(N_{i1},N_{i2}\right)$, $T_i=\mbox{block diag}\left(T_{i1},T_{i2}\right)$, and $ \Psi_{i}=\mbox{block diag} \left(\Psi_{i1},\Psi_{i2}\right) $. For the nominal value $\sigma_i=0$, we have
\begin{align*}
E^{\sigma_i}_{i}= & \Psi_i\left(T_{i}^{0}\right)^{-1}-\Psi_i\left(T_{i}^{\sigma_i}\right)^{-1}=\left[
                                                                             \begin{array}{cccc}
                                                                                \sigma_{i1}^2 &0 & 0& 0 \\
                                                                                 0& 0& \sigma_{i2}^2&  0 \\
                                                                             \end{array}
                                                                           \right]\\
=&\left[
     \begin{array}{cccc|cccc}
       1 & 0 & 0 & 0 & 0 & 0 & 0 & 0 \\
       0 & 0 & 0 & 0& 1 & 0 & 0 & 0 \\
     \end{array}
   \right]
\left[\left[
        \begin{array}{c}
          \varrho_{i1} \\
          \varrho_{i2}\\
        \end{array}
      \right]\otimes I_4\right]\\
=&\left[
     \begin{array}{cc}
     E_{i1}&E_{i2}\\
     \end{array}
   \right]\left[\varrho_i\otimes I_4\right]\\
 =&E_i\left[\varrho_i\otimes I_4\right],
 \end{align*}
where $\varrho_i=\col\left(\varrho_{i1}, \varrho_{i2}\right)=\col\left(\sigma_{i1}^2,\sigma_{i2}^2\right)$. {Then, we have $\omega_i  =\col\left(\Theta_i,\varrho_{i}\otimes\Theta_i,\varrho_{i}\right)$.} Next, the terms $\rho_i(q_i,\dot{q}_i,s_i,\zeta_i)$ and $P_i(q_i,\dot{q}_i,s_i)$ can be obtained from the
following equations
\begin{align*}
          \rho_i(q_i,\dot{q}_i,s_i,\zeta_i) =&\left[
                                                \begin{array}{c}
                                                  A_i\zeta_i+Q_i(q_i,\dot{q}_i,s_i) \\
                                                  E_i\circ \left[\zeta_i+NL_i\left(q_i,s_i\right)\right]\\
                                                  E_i\circ\xi_i \\
                                                \end{array}
                                              \right],
          \\
            P_i(q_i,\dot{q}_i,s_i)\Theta_i =&  M_iN_i\mathcal{M}_i\left(q_i\right)s_i+N_iC_i\left(q_i,\dot{q}_i\right)s_i\\
            &-N_i\dot{\mathcal{M}}_i\left(q_i\right)s_i +N_iY_i\left(q_i,\dot{q}_i,{\myr \dot{q}_{ri}, \ddot{q}_{ri}}\right)\Theta_i, \end{align*}
 with
\begin{align*}
             Q_i(q_i,\dot{q}_i,s_i)\Theta_i=& A_iN_i\mathcal{M}_i\left(q_i\right)s_i-Y_i\left(q_i,\dot{q}_i,{\myr \dot{q}_{ri}, \ddot{q}_{ri}}\right)\Theta_i,\\
             L_i(q_i,s_i)\Theta_i=&\Mathcal{M}_i\left(q_i\right)s_i.
 \end{align*}
%we can get $\rho_i(q_i,\dot{q}_i,s_i,\zeta_i)$ and $P_i(q_i,\dot{q}_i,s_i)$.
Now, we are ready to construct the control law as follows
 \begin{align*}
  \tau_i&=-K_{i}s_i-\rho_i(q_i,\dot{q}_i,s_i,\zeta_i)\hat{\omega}_i+A_i\xi_i,\\
  \dot{\xi}_i&=M_i\xi_i+N_i\tau_i,\\
  \dot{\hat{\omega}}_i&=\Lambda^{-1}\rho_{i}^T(q_i,\dot{q}_i,s_i,\zeta_i)s_i,\\
  \dot{\zeta}_i&=M_i\zeta_i+P_i(q_i,\dot{q}_i,s_i),\\
   \dot{S}_i&= \mu_1\sum\nolimits_{j=1}^{5}{a_{ij}(S_j-S_i)},\\
\dot{\eta}_i &=S_i\eta_i + \mu_2\sum\nolimits_{j=1}^{5} {a_{ij}(\eta_j-\eta_i)}.
 \end{align*}
Select the following parameters: $\mu_1=\mu_2=2$, $K_i=40I_2$, $\alpha=6$, $\Lambda_i=0.15I_{17}$. The actual values of $\Theta_i$, $\psi_{i}$, $\sigma_{i}$ and $\phi_{ik}$ are given as: %$\omega_{i}=\col(0.1,0.2)$,~$\upsilon_{i}=0$,
\begin{align*}
% \nonumber to remove numbering (before each equation)
  \Theta_1 =& \col(0.64,1.10, 0.08,0.64,0.32),~\psi_1=\col(6,8), \\
   \Theta_2 =&\col(0.76,1.17,0.14,0.93,0.44), ~\psi_2=\col(-1,-2),\\
  \Theta_3 =& \col(0.91,1.26,0.22, 1.27,0.58),~\psi_3=\col(-2,-5), \\
  \Theta_4 =& \col(1.10,1.36,0.32,1.67,0.73), ~\psi_4=\col(3,5),\\
  \Theta_5 =& \col(1.21,1.16,0.12,1.45, 1.03),~ \psi_5=\col(-3,-2.5),
\end{align*}
$\sigma_{i}=\col(0.1,0.2)$ and $\phi_{ik}=0$. The simulation is conducted with the following initial conditions: $q_{i}=0$, $\hat{\Theta}_i=0$, $\zeta_i=0$, $\hat{\omega}_i=0$,  $\xi_i=0$, $\forall i$, and
\begin{align*}
S_1\left(0\right)&=\left[\begin{array}{cc}0 & 3 \\ -6 & 0 \\\end{array}\right],~~S_2\left(0\right)=\left[
                     \begin{array}{cc}
                       0 & -2 \\
                      1 & 0 \\
                     \end{array}
                   \right],\nonumber\\
S_3\left(0\right)&=\left[
                     \begin{array}{cc}
                       0 & -2 \\
                       -3 & 0 \\
                     \end{array}
                   \right],~~S_4\left(0\right)=\left[
                     \begin{array}{cc}
                       0 & -2 \\
                       -3 & 0 \\
                     \end{array}
                   \right],\nonumber\\
  S_5\left(0\right)&=\left[
                     \begin{array}{cc}
                       0 & 2 \\
                       -3 & 0 \\
                     \end{array}
                   \right],~\eta_1\left(0\right)=\col(0.2, 0.5),\nonumber\\
  \eta_2\left(0\right)&=\col(-0.6,0.3),~\eta_3\left(0\right)=\col(-0.1, 0.4),\nonumber\\
 \eta_4\left(0\right)&=\col(-0.6,0.6),~\eta_5\left(0\right)=\col( 0.9,0.2).\nonumber
\end{align*}
The errors in Figure \ref{fig2} show that consensus of both $q_i$ and $\dot{q}_i$ is achieved among all the five agents. The trajectories of $\eta_i$ and $S_i$ in Figure \ref{fig3} show that all five agents converge to {\myr an autonomous system arising from the communication network, the inherent properties and the initial states of the agents}.
\section{Conclusion}\label{section6}

This paper proposed a novel design for leaderless consensus and disturbance rejection problem of multiple Euler-Lagrange agents. In this setting, all agents must converge to a common behavior while being affected by persistent disturbances with unknown biases, amplitudes, initial phases and frequencies. The main feature of the proposed design is that none of the agents has information of a common reference model or of a common reference trajectory. Rather, all agents collaborate with each other through a communication network to achieve a common reference trajectory, and simultaneously reject persistent disturbances.
The analysis shows that the generalized coordinates and velocities of the multiple Euler-Lagrange systems converge to common time-varying states in a distributed way.

\footnotesize
\bibliographystyle{ieeetr}
%\bibliography{myref}

\end{document}